\documentclass[reqno]{amsart}

\usepackage[T1]{fontenc}
\usepackage{pgf,pgfarrows,pgfnodes,pgfautomata,pgfheaps,pgfshade,hyperref, amssymb}
\usepackage{amssymb}

\usepackage[english]{babel}
\usepackage[capitalize]{cleveref}

\makeatletter
\def\namedlabel#1#2{\begingroup
    #2%
    \def\@currentlabel{#2}%
    \phantomsection\label{#1}\endgroup
}
\makeatother

\usepackage{mathtools}
\usepackage[colorinlistoftodos]{todonotes}
\usepackage{soul}
\usepackage[utf8]{inputenc} 
\usepackage[T1]{fontenc}
\usepackage[normalem]{ulem}
\usepackage{cancel}

\usepackage{tikz}
\usetikzlibrary{arrows}

\usepackage{tikz}
\usetikzlibrary{arrows}

\usepackage{xcolor}

\usepackage{tabularx}
\usepackage{array, ltablex, multirow}
\usepackage{makecell}
\setcellgapes{6pt}
\makegapedcells

\usepackage{faktor}
\usepackage{xfrac}

\usepackage{enumerate}
\usepackage{comment}

\def\Z{{\mathbb Z}}

\def\N{{\mathbb N}}

\newcommand*\diff{\mathop{}\!\mathrm{d}}

\hypersetup{
    colorlinks,
    linkcolor={blue!30!black},
    citecolor={green!50!black},
    urlcolor={blue!80!black}
} 

\usepackage{mathrsfs} % for \SP and \sT 
\usepackage{dsfont}

\usepackage{float}

\usepackage{graphicx} % Required for inserting images

\usepackage{pgfplots}
\pgfplotsset{width=10cm,compat=1.9}

\newtheorem{theorem}{Theorem}[section]
\newtheorem{proposition}[theorem]{Proposition}

\newtheorem{lemma}[theorem]{Lemma}

\newcounter{thmcounter}

\newcounter{introthmcounter}

\newtheorem{corollary}[theorem]{Corollary}
\theoremstyle{definition}
\newtheorem{definition}[theorem]{Definition}
\newtheorem*{definition*}{Definition}

\newtheorem{question}[theorem]{Question}
\newtheorem*{question*}{Question}
\newcounter{proofcount}
\AtBeginEnvironment{proof}{\stepcounter{proofcount}}% count the proofs

\theoremstyle{remark}
\newtheorem*{remark*}{Remark}

\makeatletter
\def\section{\@startsection{section}{1}%
  \z@{.5\linespacing\@plus.7\linespacing}
{.8\baselineskip}%
  {\normalfont\fontsize{11}{13}\centering\bfseries}%
}

\makeatletter
\def\subsection{\@startsection{subsection}{2}%
  \z@{.4\linespacing\@plus.7\linespacing}
{.6\baselineskip}%
  {\normalfont\centering\bfseries}%
}

\makeatletter                  % reset the claim counter each time proofcount is
\@addtoreset{claim}{proofcount}% increased, effectively this restarts the claims
\makeatother

\theoremstyle{remark}

\usepackage{pgfarrows}

\renewcommand{\d}{~\mathrm{d}}
\def\N{{\mathbb N}}

\newcommand{\xmt}{(X,\mu,T)}

\newcommand{\gen}{\texttt{\textup{gen}}}

\newcommand{\Erdos}{Erd\H{o}s}
\newcommand{\Folner}{F\o{}lner}

\newcommand{\oh}{{\rm o}}

\makeatletter
\renewcommand{\tocsection}[3]{%
  \indentlabel{\@ifnotempty{#2}{\bfseries\ignorespaces#1 #2\quad}}\bfseries#3}
\renewcommand{\tocsubsection}[3]{%
  \indentlabel{\@ifnotempty{#2}{\ignorespaces#1 #2\quad}}#3}
\def\@tocline#1#2#3#4#5#6#7{\relax
  \ifnum #1>\c@tocdepth % then omit
  \else
    \par \addpenalty\@secpenalty\addvspace{#2}%
    \begingroup \hyphenpenalty\@M
    \@ifempty{#4}{%
      \@tempdima\csname r@tocindent\number#1\endcsname\relax
    }{%
      \@tempdima#4\relax
    }%
    \parindent\z@ \leftskip#3\relax \advance\leftskip\@tempdima\relax
    \rightskip\@pnumwidth plus1em \parfillskip-\@pnumwidth
    #5\leavevmode\hskip-\@tempdima{#6}\nobreak
    \leaders\hbox{$\m@th\mkern \@dotsep mu\hbox{.}\mkern \@dotsep mu$}\hfill
    \nobreak
    \hbox to\@pnumwidth{\@tocpagenum{\ifnum#1=1\bfseries\fi#7}}\par% <-- \bfseries for \section page
    \nobreak
    \endgroup
  \fi}
\AtBeginDocument{%
\expandafter\renewcommand\csname r@tocindent0\endcsname{0pt}
}
\def\l@subsection{\@tocline{2}{0pt}{2.5pc}{5pc}{}}
\makeatother

\title{Sharp density conditions for infinite $B+B$ sumsets in abelian groups}
\author{Ioannis Kousek}
\address{Mathematics Institute, University of Warwick, Coventry, UK}
\email{ioannis.kousek@warwick.ac.uk}
\date{}

\begin{document}

\begin{abstract}
Motivated by recent results \cite{charamaras_kousek_mountakis_radic2025BBingroups} on 
infinite sumsets of the form 
$B+B=\{b_1+b_2:b_1,b_2\in B\}$ in 
large subsets of abelian groups, and an old problem of Owings 
\cite[Problem E2494]{Owing_problems} about the partition regularity of $B+B$ in $2$ colours, we 
show the following theorem. Let $(G,+)$ be a countable abelian 
group such that the subgroup $\{g+g\colon g\in G\}$ has finite 
index and the doubling map $D: g\mapsto g+g$ has finite kernel. Let also $\Phi=(\Phi_N)_{N}$ be any \Folner\ sequence in $G$ and $\Phi/2=(D^{-1}(\Phi_N))_{N}$. Then, if 
$A\subset G$ is such that 
$\diff_{\Phi}(A)+\diff_{\Phi/2}(A)>1$, 
there is an infinite set $B\subset G$ and some $t\in G$ for 
which $t+B+B\subset A$.  

We prove that this result implies the main theorem in 
\cite{charamaras_kousek_mountakis_radic2025BBingroups}, and 
construct an example to show the reverse implication does not 
hold. Moreover, we show that our 
main theorem is optimal in a strong sense. Namely, for any 
countable abelian group $(G,+)$ with the aforementioned 
assumptions -- which are necessary -- there exists a \Folner\ sequence $\Phi$ and a set $A\subset G$ so that $\diff_{\Phi}(A)+\diff_{\Phi/2}(A)=1$, but there is no infinite set $B\subset G$ and $t\in G$ for which $t+B+B\subset A$. 
\end{abstract}

\maketitle

\tableofcontents

\section{Introduction}

Motivated by recent breakthroughs 
\cite{ Kra_Moreira_Richter_Robertson:2022, Kra_Moreira_Richter_Robertson:2023,Moreira_Richter_Robertson19} on the general problem of finding 
infinite sumsets in sets with positive density, and specifically by a conjecture of Kra, Moreira, Richter and Robertson \cite[Conjecture 3.7]{Kra_Moreira_Richter_Robertson_problems}, Radi{\'c} and the author \cite{kousek_radic2025BB} proved the first density regularity type of results for unrestricted infinite sumsets. In \cite[Theorems 1.2, 1.3]{kousek_radic2025BB} it was shown, for example, that for $A\subset \N$ with $\underline{\diff}(A)>1/2$, there is an infinite set $B\subset \N$ and some integer $t\geq 0$, such that $t+B+B\subset A$. 

We will refer to \textit{the $B+B$ problem} as the general problem of finding a density scheme -- in a countable abelian group -- for which any set with large enough density contains a sumset of the form $t+B+B$. To define such a density scheme, we use the notion of \textit{\Folner\ sequences}. Recall that given an abelian group $G$, a sequence  
$\Phi=(\Phi_N)_{N\in \N}$ of finite subsets of $G$ is 
a \Folner{} sequence
if for any $g\in G$, 
\begin{equation*}
    \lim_{N\to \infty} 
\frac{|\Phi_N \cap (g+\Phi_N)|}{|\Phi_N|}=1.
\end{equation*}
The \textit{upper density of} $A\subset G$ with
respect to $\Phi$ is defined as   
$\overline{\diff}_{\Phi}(A)  = \limsup_{N\to \infty} \frac{|A\cap \Phi_N|}{|\Phi_N|}$, we speak of the density $\diff_{\Phi}(A)$ when the limit exists, and we say that $A$ has \textit{positive upper Banach density} if $\overline{\diff}_{\Psi}(A)>0$ for some \Folner\ sequence $\Psi$ in $G$. 

Our main result reads as follows. 

\begin{theorem}\label{main theorem}
Let $(G,+)$ be a countable abelian group with $\ell=[G:2G]<\infty$ and $r=|\ker(D)|<\infty$. Let $A\subset G$ and $\Phi$ be any \Folner{} sequence in $G$ such that the densities below exist. Then, the following hold:
\begin{enumerate}
\item  \label{main_theorem_1_2}
If 
$\diff_\Phi(A)+ \diff_{\Phi/2}(A)> 1,$
there exist an infinite set $B\subset G$ and some $t\in G$ such that 
$t+B+B\subset A$. 
\item  \label{main_theorem_1_1} 
\vspace{2mm} 
If 
$\diff_\Phi(A) + \diff_{\Phi/2}(A)> \frac{2\ell-1}{\ell},$
there exists an infinite set $B\subset G$ such that $B+B\subset A$. 
\item   \label{main_theorem_evens}  \vspace{2mm} If 
$ \diff_\Phi(A\cap 2G)+\diff_{\Phi/2}(A\cap 2G) > \frac{1}{\ell},$
there exists an infinite set $B\subset G$ such that $B+B\subset A$.
\end{enumerate}    
\end{theorem}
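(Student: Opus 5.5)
We would follow the ergodic-theoretic strategy of \cite{kousek_radic2025BB} and \cite{charamaras_kousek_mountakis_radic2025BBingroups}. First, via a Furstenberg correspondence principle adapted to the pair $(\Phi,\Phi/2)$, we encode $A$ as a point $a=1_A$ in the shift space $X=\{0,1\}^G$, with clopen set $E=\{x: x(0)=1\}$. We pick $\Phi$-generic measure data so that $\mu(E)=\diff_\Phi(A)$ and an associated measure $\nu$ with $\nu(E)=\diff_{\Phi/2}(A)$. The measure $\nu$ should come from averaging along $D^{-1}(\Phi_N)$, so that it relates to $\mu$ through the doubling map. The hypotheses $[G:2G]<\infty$ and $|\ker D|<\infty$ are exactly what make $\Phi/2$ a \Folner\ sequence. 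They also give control of $|D^{-1}(\Phi_N)|$ in terms of $|\Phi_N|$, via the factors $\ell$ and $r$.

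The combinatorial target is the standard reduction to \emph{Erd\H{o}s progressions}. It suffices to find points $x_1,x_2\in X$ and a sequence $g_n\to\infty$ such that $T_{g_n}a\to x_1$ and $T_{g_n}x_1\to x_2$, together with a $t$ for which the relevant points lie in the appropriate translate of $E$. The $B+B$ analogue of the usual criterion is a triple $(a,x_1,x_2)$ with $x_2\in E$, where the shift by $g_n$ is realised along doubles. An inductive diagonal construction then yields an infinite $B$ with $t+B+B\subset A$. We would first prove this reduction lemma: a point $(a,x_1,x_2)$ in the orbit closure under $(T_g\times T_g\times T_g)$ and $(\mathrm{id}\times T_g\times T_{2g})$, with $x_2\in T_tE$, produces the sumset.

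The core step is to produce such a progression from the density hypothesis. We would disintegrate over the Kronecker factor $Z$ of the system, using a measure $\sigma$ on $X\times X$ built as a limit along $\Phi$ of $\delta_{T_ga}\times\mu_{\pi(g\cdot)}$. The resulting joining should have marginals $\mu$ and $\nu$, with $\nu$ on the second coordinate arising from the halving. Then $\sigma(E\times E)\ge \mu(E)+\nu(E)-1>0$ by inclusion--exclusion, which is exactly where the sum condition enters. Points in the support of $\sigma$ that are generic give Erd\H{o}s progressions $(a,x_1,x_2)$ with $x_1\in E$ and $x_2\in E$. A translate $t$ is needed because the Kronecker rotation must be aligned, which accounts for the shift $t$ in part (1).

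Parts (2) and (3) follow by refining the same argument so that $t$ can be taken to be $0$. For (2), the coset structure of $2G$ costs $\frac{\ell-1}{\ell}$ of density on average. Conditioning on the coset where $t+B+B$ lands forces $t\in 2G$, and then $t$ is absorbed by shifting $B$ by $t/2$. For (3), we restrict everything to $2G$, where sums $b+b'$ with $b,b'$ in a common coset of $2G$ land. The inclusion--exclusion then takes place inside a set of relative measure $1/\ell$, giving the threshold $1/\ell$.

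The main obstacle is the joining step. We must show that the measure built from $\Phi/2$ really is the second-coordinate marginal of a joining supported on genuine progressions. This requires identifying how the Kronecker factor interacts with $D$, specifically that rotation by $2g$ on $Z$ corresponds to $\Phi/2$-averages. We would first attempt this for $G$ with $D$ injective and $\ell=1$, and then handle finite $\ker D$ and the cosets of $2G$ by decomposing into the $\ell$ cosets.
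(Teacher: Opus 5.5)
Your outline has a genuine gap at exactly the step you flag as the main obstacle, and it cannot be closed in the form you propose.

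\textbf{The measure $\nu$ cannot be a marginal of a Kronecker joining of $\mu$.} The measure $\nu$, obtained by averaging $\delta_{T_ga}$ over $\Phi_N/2$, need not be related to the measure $\mu$ obtained along $\Phi_N$. The sets $\Phi_N$ and $\Phi_N/2$ may be disjoint, and then the two measures can be mutually singular even under the hypothesis of part (1). For the set $A$ of Proposition~\ref{no reverse implication of main theorems}, which has density sum $1+1/k$, $\mu$ is the point mass at the constant-$1$ configuration, while $\nu$ is supported on the $k$ translates of $1_{k\Z}$.

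A joining assembled from the fibre measures of $\mu$ over its Kronecker factor has marginals absolutely continuous with respect to $\mu$. For the measure carried by \Erdos{} progressions, the second marginal is only dominated by $\ell\mu$; this is the source of the weighted hypothesis $\ell\mu(E_2)+\mu(E_1)>\ell$ in Theorem~\ref{analogue of 2.1 KR}. So $\nu$ cannot be made its second marginal, and the bound $\sigma(E\times E)\ge \mu(E)+\nu(E)-1$ has no basis.

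\textbf{The diagonal terms are not controlled.} Even granting such a $\sigma$, an \Erdos{} progression $(a,x_1,x_2)$ in the single system $X$ with $x_1,x_2\in E$ yields, via Lemma~\ref{EP and B+B}, only $B\subset A$ and $B\oplus B\subset A$. Nothing controls the terms $b+b=2b$. Your proposed reduction lemma does not repair this. The orbit closure under $T_g\times T_g\times T_g$ and $\mathrm{id}\times T_g\times T_{2g}$ records limits of $3$-term progressions $(h,h+g,h+2g)$ and contains $(a,a,a)$ itself. That is a different notion from the \Erdos{} condition $T_{g_n}x_1\to x_2$, and it produces no infinite $B$.

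\textbf{The missing idea.} You need to encode both averaging schemes and the diagonal in a single system, which is what Lemma~\ref{correspondence} does. One averages $\delta_{(S_{2g}a,\,S_ga)}$ over $g\in\Phi_N/2$ on $\Sigma\times\Sigma$, for the action $g\mapsto S_{2g}\times S_g$.
\begin{itemize}
\item The second marginal gives $\diff_{\Phi/2}(A)$ for the set $\Sigma\times E$.
\item $D$ maps $\Phi_N/2$ onto $\Phi_N\cap 2G$ exactly $r$-to-one, and $|\Phi_N/2|\approx (r/\ell)|\Phi_N|$ by Lemma~\ref{lemma aux folner 2}. So the first marginal gives at least $\ell\,\diff_\Phi(A)-\ell+1$ for $E\times\Sigma$.
\item After passing to a suitable ergodic measure with $(a,a)$ generic along some $\Phi'$, the hypothesis of part (2) yields $\ell\mu(\Sigma\times E)+\mu(E\times\Sigma)>\ell$.
\item Theorem~\ref{analogue of 2.1 KR} with $E_1=E\times\Sigma$ and $E_2=\Sigma\times E$ then gives an \Erdos{} progression $((a,a),(x_1,x_2),(y_1,y_2))$ with $x_1\in E$ and $y_2\in E$.
\item Lemma~\ref{EP and B+B} produces $B$ with $S_{2b}a\in E$ for all $b\in B$, so $2B\subset A$, and also $B\oplus B\subset A$.
\end{itemize}

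Finally, the shift $t$ in part (1) does not come from aligning a Kronecker rotation. The paper proves part (2) dynamically and obtains (1) and (3) from it through the coset reductions of Proposition~\ref{prop equiv formulation}. That is close to the bookkeeping you sketch for (2) and (3), but run in the opposite direction.
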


Starting from a \Folner\ sequence $\Phi$, we may always pass 
to an appropriate subsequence so that the densities in 
Theorem \ref{main theorem} exist. We want to explain why 
this assumption is necessary. As the proof in Section \ref{sec proof} reveals, we want to consider densities of the form 
$$\diff_{(\Phi_{N_k})}(A) + \diff_{(\Phi_{N_k}/2)}(A),$$
along the same subsequence $(N_k)$. This is not a mere 
artifact of the proof, but rather we even have the following extreme behaviour if we consider densities along different subsequences of $\Phi$ and $\Phi/2$. 

\begin{proposition}\label{same subsequence intro}
Let $(G,+)$ be a countable abelian group with 
$\ell=[G:2G]<\infty$ and $r=|\ker(D)|<\infty$. There is a set 
$A\subset G$ and a \Folner\ sequence $\Phi$ such that 
$$\overline{\diff}_{\Phi}(A)+\overline{\diff}_{\Phi/2}(A)=2, $$
but there does not exist an infinite set $B\subset G$ and 
$t\in G$ with $t+B+B\subset A$.        
\end{proposition}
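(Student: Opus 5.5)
Our plan is to build $A$ as a union of very sparse ``blocks'' of two kinds, with the \Folner\ sequence $\Phi$ alternating between regions where $A$ looks full and regions where $D^{-1}$ of $\Phi_N$ sees $A$ as full. The key observation is that no single $N$ needs both densities to be large, only $\limsup$'s along different subsequences. Concretely, fix an increasing exhaustion of $G$ by finite sets and a \Folner\ sequence $\Psi$. We choose rapidly separated elements $g_1,g_2,\dots$ of $G$ and large \Folner\ sets $F_k$ (translates of $\Psi_{n_k}$ with $n_k\to\infty$). For odd $k$ we put $\Phi_k=g_k+F_k$ and put $g_k+F_k\subset A$, so $|A\cap\Phi_k|/|\Phi_k|=1$. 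For even $k$ we put $\Phi_k=g_k+F_k$ but include in $A$ the set $D^{-1}(g_k+F_k)=\Phi_k/2$ instead, so $|A\cap D^{-1}(\Phi_k)|/|D^{-1}(\Phi_k)|=1$. A sequence of translates of \Folner\ sets is again \Folner, so $\Phi$ is \Folner\ and both upper densities equal $1$, giving the sum $2$. Since $D$ has finite kernel, $D^{-1}(\Phi_k)$ is finite, and since $[G:2G]<\infty$ it is nonempty for suitable $g_k$ (we take $g_k\in 2G$ and $F_k\subset 2G$, or just intersect, possibly after replacing $\Psi$ by a \Folner\ sequence inside cosets in the standard way).

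The main work is making $A$ contain no $t+B+B$. We choose the $g_k$ so that the pieces $P_k$ of $A$ (either $g_k+F_k$ or $D^{-1}(g_k+F_k)$) are ``lacunary'': each new piece lies far from all sums and differences of everything built so far. Concretely, with $S_{k-1}=\bigcup_{j<k}P_j$ finite, we pick $g_k$ so that $P_k$ is disjoint from $S_{k-1}+K_{k}$, where $K_k$ is a large finite set, e.g.\ the $k$-th set of the exhaustion together with $S_{k-1}-S_{k-1}$ and its halvings, and also so that $P_k-P_k$ avoids a corresponding set. This is possible since $G$ is infinite and the conditions exclude only finitely many translates, using that $D$ has finite kernel to control preimages.

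Now suppose $t+B+B\subset A$ with $B=\{b_1,b_2,\dots\}$ infinite. The elements $t+2b_i$ and $t+b_1+b_i$ all lie in $A$. As $i\to\infty$ they are distinct (finite kernel of $D$ gives infinitely many distinct $2b_i$), so they escape every finite union of pieces. Pick $i<j$ with $t+b_i+b_j$, $t+2b_j$, $t+b_1+b_j$ in pieces of late index. The differences $(t+b_1+b_j)-(t+b_i+b_j)=b_1-b_i$ and $(t+2b_j)-(t+b_1+b_j)=b_j-b_1$ are fixed or controlled by earlier data. Lacunarity then forces these sums into the same late piece $P_k$, and after iterating, forces $t+2b_j,\ t+2b_{j'}$ and $t+b_j+b_{j'}$ to lie in a common piece for infinitely many $j$. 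But then infinitely many $b_j$ give points in one finite piece, a contradiction.

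The hardest step is this final combinatorial argument. The difficulty is that the pieces themselves are large \Folner\ sets, so the within-piece structure does contain many finite $t+B+B$. We therefore need a precise separation condition, of the form ``$(P_k+P_k-S_{k-1})\cap$ later pieces $=\emptyset$ and $2(\cdot)$-images separated'', which rules out an infinite $B$ straddling infinitely many pieces. I would first try proving that all $t+b_1+b_j$ ($j$ large) lie in one piece, which is already a contradiction by finiteness of that piece.
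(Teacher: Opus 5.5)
Your set-up works. Putting the pieces $P_k=\Phi_k$ ($k$ odd) and $P_k=\Phi_k/2$ ($k$ even) into $A$, with $\Phi_k$ translates of $\Psi_{n_k}$, gives a \Folner\ sequence with both upper densities equal to $1$. This mirrors the paper's interleaving $F_N=\Phi_{2N}\cup\Phi_{2N+1}/2$. A side remark: do not take $F_k\subset 2G$, since that destroys the \Folner\ property when $\ell>1$. Nonemptiness of $\Phi_k/2$ is automatic anyway, because $|\Phi_k/2|/|\Phi_k|\to r/\ell$.

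\textbf{The gap} is in the step you call hardest, and it is aimed wrongly rather than merely unfinished. The difference $(t+2b_j)-(t+b_1+b_j)=b_j-b_1$ is not controlled by earlier data; it tends to infinity. So nothing places $t+2b_j$ in the piece containing $t+b_1+b_j$, and the ``iterating'' step has no content. Moreover, the separation you impose is purely additive: $P_k$ avoids $S_{k-1}+K_k$, and the condition on $P_k-P_k$ does not even depend on $g_k$. No condition of that type can suffice. In $\Z$ with $\Psi_n$ intervals, your rules allow $P_k=[M_k,3M_k]$ for odd $k$, with $M_k$ larger than everything chosen before. Then $B=\{M_k:k\text{ odd}\}$ satisfies $B+B\subset A$, because $M_i+M_j\in[M_j,2M_j]$ whenever $M_i\le M_j$.

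\textbf{The missing idea} is one linear relation. Set $x_j=t+b_1+b_j$, $y_j=t+2b_j$ and $c=t+2b_1$. Then $y_j=2x_j-c$, with $x_j,y_j\in A$ and $c$ fixed. The $x_j$ are pairwise distinct, so they eventually leave every finite union of pieces.

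Fix an exhaustion $E_1\subset E_2\subset\cdots$ of $G$. At stage $n$, require $(2P_m-c)\cap P_k=\emptyset$ for all $c\in E_n$ and all $m,k\le n$ with $\max(m,k)=n$, \emph{including} $m=k=n$. This self-interaction condition is exactly the non-q.i.d.\ property that the paper imports from \cite[Corollary 5.6]{charamaras_kousek_mountakis_radic2025BBingroups}, and it is what $[M_k,3M_k]$ violates.

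Each requirement excludes only finitely many $g_n$:
\begin{itemize}
\item For $m=k=n$ it reads $g_n\notin F_n-2F_n+E_n$ when $n$ is odd, and $g_n\notin F_n-2F_n+2E_n$ when $n$ is even.
\item For $m=n>k$ with $n$ odd, it asks $2g_n$ to avoid a finite set. This excludes finitely many $g_n$ because $\ker(D)$ is finite.
\item The remaining cases are similar.
\end{itemize}

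Now suppose $t+B+B\subset A$. For large $j$ we have $x_j\in P_m$ and $y_j\in P_k$ with $\max(m,k)$ so large that $c\in E_{\max(m,k)}$. Then $y_j\in(2P_m-c)\cap P_k$, which the stage-$\max(m,k)$ condition forbids, a contradiction. With this replacement your route becomes a self-contained proof of the step that the paper outsources to \cite{charamaras_kousek_mountakis_radic2025BBingroups}.
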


Returning to our original discussion, the results in \cite{kousek_radic2025BB} were restricted to 
taking upper and lower densities with respect to the original 
\Folner\ sequence $N\mapsto [1,N]=\{1,\ldots,N\}$ in $\N$.
The threshold values established therein were proved to be 
optimal, and while it was known that not all \Folner\ 
sequences can be used to obtain analogous results (see, for 
example, \cite[Example 3.6]{Kra_Moreira_Richter_Robertson_problems}), it was not explored
any further whether other density schemes, e.g. density of a set along other \Folner\ sequences, could be used to 
address the $B+B$ problem in $\N$. 

Around the same time, Charamaras and Mountakis 
\cite{Charamaras_Mountakis_2025} proved the analogue of the 
main result in \cite{Kra_Moreira_Richter_Robertson:2023} for 
the widest class of countable abelian groups possible. In 
particular, they showed that for countable abelian groups $G$
with $[G:2G]<\infty$, any set with positive upper Banach density contains an infinite sumset $t+B\oplus B=\{t+b_1+b_2:b_1,b_2\in B,\ \text{and}\ b_1\neq b_2\}$. 
In an effort to extend the results of 
\cite{Charamaras_Mountakis_2025} to cover unrestricted 
sumsets, and to understand more deeply the necessary 
structural properties of \Folner\ sequences that can be used 
as averaging schemes to a density solution for the $B+B$ 
problem, Charamaras, Mountakis, Radi{\'c} and the author 
\cite{charamaras_kousek_mountakis_radic2025BBingroups} 
introduced the following notion. Recall that for a set $C \subset G$ of an abelian group $(G,+)$, we write $2C = D(C)$ and $C/2 = D^{-1}(C)$, where $D\colon G \to G$ is the \emph{doubling map} $g \mapsto 2g$.

\begin{definition}[cf. {\cite[Definition 1.2]{charamaras_kousek_mountakis_radic2025BBingroups}}]\label{quasi-invariant to doubling def}
Let $G$ be a countable abelian group. The {\em the doubling ratio} of a \Folner{} sequence $\Phi=(\Phi_N)_{N\in\N}$ in $G$ is defined as
    \begin{equation} \label{eq alpha_Phi defn}
        \alpha_{\Phi}  = 
        \liminf_{N\to \infty} \frac{| \Phi_N/2 \cap \Phi_N|}{|\Phi_N|}.
    \end{equation}
    If $\alpha_\Phi>0$, we say that the \Folner{} sequence $\Phi$ is 
    {\em quasi-invariant with respect to doubling (q.i.d.) with ratio $\alpha_\Phi$}.
\end{definition}

The authors in 
\cite{charamaras_kousek_mountakis_radic2025BBingroups} showed 
that the q.i.d. assumption can be used to provide density 
threshold values for the $B+B$ problem via the following
theorem. 

\begin{theorem}[{\cite[Theorem 1.3]{charamaras_kousek_mountakis_radic2025BBingroups}}]\label{CKMR}
Let $(G,+)$ be a countable abelian group with $\ell=[G:2G]<\infty$ and $r=|\ker(D)|<\infty$. 
Let $A\subset G$ and $\Phi$ be any \Folner{} sequence in $G$ that is quasi-invariant with respect to doubling with ratio $\alpha_{\Phi}$. Then the following hold:
\begin{enumerate}
\item  \label{main_theorem_1_2'}
If 
$\displaystyle \overline{\diff}_\Phi(A) > 1 - \frac{\ell\alpha_\Phi}{\ell +r},$
there exist an infinite set $B\subset G$ and some $t\in G$ such that 
$t+B+B\subset A$. 
\item  \label{main_theorem_1_1'} 
\vspace{2mm} 
If  
$\displaystyle { \overline{\diff}_\Phi(A) > 1 - \frac{\alpha_\Phi}{\ell+r}},$
there exists an infinite set $B\subset G$ such that $B+B\subset A$. 
\item   \label{main_theorem_evens'}  \vspace{2mm} If 
$\displaystyle\overline{\diff}_\Phi(A\cap 2G) > \frac{1}{\ell} - \frac{\alpha_\Phi}{\ell +r},$
there exists an infinite set $B\subset G$ such that $B+B\subset A$.
\end{enumerate}    
\end{theorem}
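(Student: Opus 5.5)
We derive the statement from \cref{main theorem}. Given $\Phi$ q.i.d.\ with ratio $\alpha_\Phi$, first choose a subsequence $(N_k)$ along which $\frac{|A\cap\Phi_{N_k}|}{|\Phi_{N_k}|}$ (respectively $\frac{|A\cap 2G\cap\Phi_{N_k}|}{|\Phi_{N_k}|}$ for part (3)) converges to the upper density. Then refine it so that the corresponding densities along $\Phi_{N_k}/2$ also exist. Passing to a subsequence keeps the F\o lner property, and the doubling ratio of $(\Phi_{N_k})$ is still at least $\alpha_\Phi$, since it is a $\liminf$. It therefore suffices to show that the density hypotheses of the statement imply those of \cref{main theorem}. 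Part (2) reduces to part (3) directly: since $\diff(A\cap 2G)\ge \diff(A)-(1-\frac1\ell)$, the hypothesis of (2) gives $\overline{\diff}_\Phi(A\cap 2G)>\frac1\ell-\frac{\alpha_\Phi}{\ell+r}$.

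Two counting facts are needed. (a) Since $[G:2G]=\ell<\infty$, each coset of $2G$ has density $1/\ell$ along any F\o lner sequence. Each element of $2G$ has exactly $r$ preimages under $D$, so $|\Phi_N/2|=r|\Phi_N\cap 2G|\sim \frac{r}{\ell}|\Phi_N|$. (b) The set $\Phi_N\cap\Phi_N/2=\{x: x\in\Phi_N,\ 2x\in\Phi_N\}$ is itself asymptotically invariant: for fixed $h$, $x+h\in\Phi_N$ and $2x+2h\in\Phi_N$ fail only for $\oh(|\Phi_N|)$ many $x$, since $\Phi_N$ is almost invariant under $h$ and $2h$. Hence, whenever its size is $\gg|\Phi_N|$, it is equidistributed among the cosets of $2G$. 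The same holds for $\Phi_N/2$, which is F\o lner by (a) and the same argument.

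For (1), let $\delta=\diff_\Phi(A)$. Using
$$|A^c\cap\Phi_N/2|\le |A^c\cap\Phi_N|+|\Phi_N/2|-|\Phi_N\cap\Phi_N/2|$$
and dividing by $|\Phi_N/2|\sim\frac r\ell|\Phi_N|$ gives
$$\diff_{\Phi/2}(A)\ge \frac{\ell\alpha_\Phi}{r}-\frac{\ell}{r}(1-\delta).$$
Then $\delta+\diff_{\Phi/2}(A)>1$ is equivalent to $(1-\delta)\frac{r+\ell}{r}<\frac{\ell\alpha_\Phi}{r}$, which is exactly the hypothesis $\delta>1-\frac{\ell\alpha_\Phi}{\ell+r}$. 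We then apply \cref{main theorem}(1).

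For (3), let $\eta=\diff_\Phi(A\cap 2G)$ and $E=2G\setminus A$. The same estimate is run inside $2G$:
$$|E\cap\Phi_N/2|\le |E\cap\Phi_N|+|(\Phi_N/2\setminus\Phi_N)\cap 2G|.$$
The key point is to bound the last term by roughly $\frac1\ell\bigl(|\Phi_N/2|-\alpha_\Phi|\Phi_N|\bigr)$, using the equidistribution in (b) for both $\Phi_N/2$ and $\Phi_N\cap\Phi_N/2$. This yields
$$\diff_{\Phi/2}(A\cap 2G)\ge \frac{\ell\eta-1+\alpha_\Phi}{r}.$$
Then $\eta+\diff_{\Phi/2}(A\cap2G)>\frac1\ell$ becomes $\eta(r+\ell)>\frac{r+\ell}{\ell}-\alpha_\Phi$, which is precisely the hypothesis, and we apply \cref{main theorem}(3).

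The main obstacle is step (b): the coset-equidistribution of $\Phi_N\cap\Phi_N/2$. Without it, the crude bound $|\Phi_N/2\setminus\Phi_N|$ loses a factor $\ell$, and the resulting threshold for (3) no longer matches the stated one. The degenerate case where $|\Phi_N\cap\Phi_N/2|$ is not of order $|\Phi_N|$ does not arise, since $\alpha_\Phi>0$.
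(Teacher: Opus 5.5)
Your proof is correct. For part (1) it is the paper's proof of Proposition~\ref{implication of main results}. Your inequality $|A^c\cap\Phi_N/2|\le|A^c\cap\Phi_N|+|\Phi_N/2|-|\Phi_N\cap\Phi_N/2|$ is the complement form of the paper's estimate $|A\cap\Phi_N|\le|A\cap\Phi_N\cap\Phi_N/2|+|\Phi_N|-|\Phi_N\cap\Phi_N/2|$, used together with $|A\cap\Phi_N/2|\ge|A\cap\Phi_N\cap\Phi_N/2|$. Both give $\diff_{\Phi/2}(A)\ge\frac{\ell}{r}(\diff_\Phi(A)-1+\alpha_\Phi)$.

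The difference lies in parts (2) and (3). The paper never compares those hypotheses. It checks only part (1) and then uses the equivalence of the three statements within each theorem: Proposition~\ref{prop equiv formulation} here, and Proposition 2.3 of the cited paper for Theorem~\ref{CKMR}.

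You instead compare hypotheses directly. You reduce (2) to (3) inside Theorem~\ref{CKMR}, then run the estimate within $2G$. This needs your extra ingredient (b): $\Phi_N/2$ and $\Phi_N\cap\Phi_N/2$ are asymptotically invariant, hence equidistributed over the cosets of $2G$. When you check invariance for the condition $2x+2h\in\Phi_N$, you are implicitly using $r<\infty$, since each bad $y=2x$ has $r$ preimages.

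What each route buys:
\begin{itemize}
\item The paper's route is shorter and avoids this lemma, but relies on the equivalence result from the earlier paper as a black box.
\item Your route is self-contained given Theorem~\ref{main theorem}. It also gives a finer, set-by-set statement: for a fixed $A$ and fixed q.i.d.\ $\Phi$, the hypothesis of part (3) of Theorem~\ref{CKMR} (and hence that of part (2)) forces the hypothesis of part (3) of Theorem~\ref{main theorem} along your subsequence.
\end{itemize}

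Your detour through (3) is the right move. The crude bound alone would compare the two parts (2) only if $\frac{(\ell-1)\alpha_\Phi}{r}\ge\frac{\ell-1}{\ell}$. But always $\alpha_\Phi\le r/\ell$, so this holds only when $\ell=1$ or $\alpha_\Phi=r/\ell$.
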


To complement this result, it was shown in 
\cite[Proposition 5.4]{charamaras_kousek_mountakis_radic2025BBingroups} 
that if 
a \Folner\ sequence is not q.i.d. there is a set of full 
density along this sequence that contains no sumset as in 
\eqref{main_theorem_1_2'}. In that sense, the q.i.d. 
assumption is necessary for Theorem \ref{CKMR}. This 
proposition (and thus Definition \ref{quasi-invariant to doubling def}) will be used repeatedly in the constructions of Section \ref{optimality sec}.

It is not obvious, but it turns out that Theorem \ref{main theorem} implies Theorem \ref{CKMR} and we prove this in Proposition \ref{implication of main results}. Perhaps even more interesting is the fact that the reverse implication is false. Namely, Theorem \ref{CKMR} cannot be used to deduce Theorem \ref{main theorem} as there exist sets that satisfy, say, condition \eqref{main_theorem_1_2} of Theorem \ref{main theorem} along some \Folner\ sequence, but not condition \eqref{main_theorem_1_2} of Theorem \ref{CKMR} along any \Folner\ sequence. 

\begin{proposition}\label{no reverse implication of main theorems intro}
There is a set 
$A\subset \Z$ such that 
$$\overline{\diff}_{\Phi}(A) \leq 1-\frac{\ell \alpha_{\Phi}}{\ell+r},$$
for every \Folner\ sequence $\Phi$ in $\Z$, but 
$$\diff_{F}(A)+\diff_{F/2}(A)>1, $$
for some \Folner\ sequence $F$ in $\Z$.
\end{proposition}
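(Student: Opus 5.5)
The plan is to take the explicit $2$-adic set
$$A=\{x\in\Z\setminus\{0\}\colon v_2(x)\ \text{is even}\},$$
where $v_2$ denotes the $2$-adic valuation. In $\Z$ we have $\ell=[\Z:2\Z]=2$ and $r=|\ker(D)|=1$, so the first requirement reads $\overline{\diff}_\Phi(A)\le 1-\tfrac{2\alpha_\Phi}{3}$ for every \Folner\ sequence $\Phi$ in $\Z$. The design principle is that $v_2(2x)=v_2(x)+1$, so $x\in A$ if and only if $2x\notin A$ for $x\neq 0$. Thus $A/2$ is essentially the complement of $A$, and this is what makes the sum of the two densities exceed $1$.

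\emph{Step 1: $A$ has density $2/3$ along every \Folner\ sequence.} If $\Phi$ is \Folner\ in $\Z$, then $|\Phi_N\cap(\Phi_N+1)|/|\Phi_N|\to1$. Hence for each fixed $j$, the residues mod $2^j$ are asymptotically equidistributed in $\Phi_N$: $|\Phi_N\cap(c+2^j\Z)|/|\Phi_N|\to 2^{-j}$ for every $c$. Now $A$ agrees, up to elements of $2^J\Z$, with the finite union of classes $\{x\colon v_2(x)=j\}$ for even $j<J$, which has density $\sum_{j<J,\ j\ \text{even}}2^{-j-1}$. The remaining error is at most $2^{-J}$, and letting $J\to\infty$ gives $\diff_\Phi(A)=\sum_{j\ \text{even}}2^{-j-1}=2/3$.

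\emph{Step 2: $\alpha_\Phi\le 1/2$ for every \Folner\ sequence.} We have $|\Phi_N/2\cap\Phi_N|\le|\Phi_N/2|=|\Phi_N\cap2\Z|$, since doubling is injective on $\Z$. By the equidistribution in Step 1 with $j=1$, the right-hand side is $(1/2+\oh(1))|\Phi_N|$, so $\alpha_\Phi\le1/2$. Combining with Step 1,
$$\overline{\diff}_\Phi(A)=\tfrac23\le 1-\tfrac{2}{3}\cdot\tfrac12\le 1-\tfrac{2\alpha_\Phi}{3}.$$
For non-q.i.d.\ $\Phi$ (i.e.\ $\alpha_\Phi=0$) this bound holds trivially as well.

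\emph{Step 3: the sum condition along $F_N=[-N,N]$.} Here $F_N/2=[-N/2,N/2]$, and
$$\frac{|A\cap F_N/2|}{|F_N/2|}=\frac{|\{x\in F_N\cap2\Z\colon v_2(x)\ \text{odd}\}|}{|F_N\cap 2\Z|}\to\frac{1/3}{1/2}=\frac23,$$
by the same equidistribution count. This gives $\diff_F(A)+\diff_{F/2}(A)=\tfrac43>1$. Since $F$ is itself \Folner, Step 1 already gives $\diff_F(A)=2/3$.

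The only step requiring care is the uniformity in Step 1. Equidistribution mod $2^j$ must be combined with the tail $2^J\Z$ to show that the density exists and is not merely bounded above. I expect this truncation argument to be routine, and nothing else should pose an obstacle.
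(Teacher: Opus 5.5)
Your proof is correct, and it takes a different and much simpler route than the paper. The paper builds a sparse example. It takes the non-q.i.d.\ sequence $F_N=[4^{2N},2\cdot 4^{2N})$, with $F_N,F_N/2,F_N/4$ pairwise disjoint, and sets $A=\bigcup_N F_N\cup\bigcup_N(F_N/2\cap k\N)$, so that $\diff_F(A)+\diff_{F/2}(A)=1+\frac1k$. It then checks $\overline{\diff}_\Phi(A)\le 1-\frac{2\alpha_\Phi}{3}$ for every \Folner\ sequence by splitting $\Phi_N$ into pieces inside the blocks $F_n$ and $F_n/2$ and bounding $|\Phi_N\cap\Phi_N/2|$ block by block.

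You instead use the universal cap $\alpha_\Phi\le\lim_N|\Phi_N/2|/|\Phi_N|=r/\ell=\frac12$. This means the threshold $1-\frac{\ell\alpha_\Phi}{\ell+r}$ in Theorem \ref{CKMR} is never below $\frac{\ell}{\ell+r}=\frac23$. Hence any set of upper Banach density at most $\frac23$ fails that hypothesis along every \Folner\ sequence. Meanwhile, a set of uniform density $d>\frac12$ satisfies the hypothesis of Theorem \ref{main theorem} along every \Folner\ sequence $F$, because $F/2$ is again \Folner. This makes the ``for every $\Phi$'' part immediate and fully rigorous. It also shows the $2$-adic structure is irrelevant: $\Z\setminus 3\Z$, or any set of uniform density in $(\frac12,\frac23]$, would do.

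Your stated design principle is a red herring. $\diff_{F/2}(A)$ is the density of $A$ along $F/2$, not the density of $A/2$ along $F$; for your set, in fact, $\diff_F(A)+\diff_F(A/2)=1$. Step 3 is just Step 1 applied to the \Folner\ sequence $F/2$.

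What the paper's construction buys in exchange is a more informative example. Its set contains the long intervals $F_N$, so it has density $1$ along $F$. This shows the hypothesis of Theorem \ref{CKMR} can fail along every \Folner\ sequence even for a set that is full along one of them. Your trick also needs the window $\frac12<\frac{\ell}{\ell+r}$, that is, $\ell>r$. This holds in $\Z$ but the window is empty when doubling is an automorphism ($\ell=r=1$), whereas the paper suggests its block construction adapts to other groups.
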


We also remark that the three statements in Theorem \ref{CKMR} were shown to be equivalent in \cite{charamaras_kousek_mountakis_radic2025BBingroups} and we do the same for the statements of Theorem \ref{main theorem} in Section \ref{section 2}.

The proof of Theorem \ref{main theorem} is dynamical, 
similarly to the proof of Theorem \ref{CKMR}. In fact, the 
main dynamical result behind both theorems is the same. The 
difference lies in the way we implement the correspondence principle, which is used to translate the combinatorial input 
of the statement into an assumption about a corresponding
dynamical system. 

The reason why the stronger Theorem \ref{main theorem} was 
not found in 
\cite{charamaras_kousek_mountakis_radic2025BBingroups} 
has to do with the broader viewpoint of how the problem was 
formulated and thus approached there. In particular, from the perspective of the authors in \cite{charamaras_kousek_mountakis_radic2025BBingroups} ``the main question to address is whether for a given abelian group $G$ one can find a \Folner{} sequence $\Phi$ and a constant $c=c(G,\Phi)>0$ such that any set $A\subset G$ with upper density 
$$\overline{\diff}_\Phi(A) = \limsup_{N \to \infty} \frac{|A \cap \Phi_N|}{|\Phi_N|}>c$$ 
contains an unrestricted sumset of the form $B+B$ for some 
infinite set $B\subset G$.'' 

The reason why Theorem \ref{main theorem} was found now is 
twofold. Firstly, it appeared naturally in considerations of 
questions about the optimality of Theorem \ref{CKMR} 
(optimality in the 
sense of sharpness of the threshold bounds). These questions 
were formulated in \cite[Question 1.6]{charamaras_kousek_mountakis_radic2025BBingroups} and \cite[Question 1.7]{charamaras_kousek_mountakis_radic2025BBingroups}. We present here their analogues for Theorem \ref{main theorem}.
The first question asks whether the result is 
optimal in any group where it holds. 

\begin{question}[{cf. {\cite[Question 1.7]{charamaras_kousek_mountakis_radic2025BBingroups}}}] \label{open_qu_1}
Let $G$ be a countable abelian group with $\ell=[G:2G]<\infty$ and 
$r=|\ker(D)|<\infty$. Can one always find a \Folner{} sequence $\Phi$ in $G$ and a set  
$A\subset G$ with 
$\diff_{\Phi}(A)+\diff_{\Phi/2}(A)=1,$ 
such that $t+B+B \not \subset A$ for any infinite $B\subset G$ and $t\in G$?
\end{question}

The answer to \cite[Question 1.7]{charamaras_kousek_mountakis_radic2025BBingroups} is not known; however, the weaker assumptions in our main result allow us to address its analogue, Question \ref{open_qu_1}, in full generality. 

\begin{theorem}\label{optimality 1 intro}
Let $(G,+)$ be a countable abelian group with 
$\ell=[G:2G]<\infty$ and $r=|\ker(D)|<\infty$. There is a set 
$A\subset G$ such that 
$$\diff_{\Phi}(A)+\diff_{\Phi/2}(A)=1, $$
for some \Folner\ sequence $\Phi$ in $G$, but there does not 
exist an infinite set $B\subset G$ and $t\in G$ with 
$t+B+B\subset A$.    
\end{theorem}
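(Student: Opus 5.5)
The plan is to construct directly a Følner sequence $\Phi=(F_N)$ whose sets are spread out and pairwise very far apart, and to take $A=\bigcup_N F_N$. The aim is $\diff_\Phi(A)=1$ and $\diff_{\Phi/2}(A)=0$. The sets will be placed so that $A$ and $2A$ are "incompatible up to bounded shifts", and this incompatibility will rule out any $t+B+B\subset A$. This mirrors the philosophy of \cite[Proposition 5.4]{charamaras_kousek_mountakis_radic2025BBingroups}, which says that non-q.i.d. sequences carry full-density sets with no such sumsets. Here, however, we also need control of $\Phi/2$, so I would rebuild the construction from scratch rather than quote that proposition.

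\textbf{Construction.} Fix an exhaustion $K_1\subset K_2\subset\cdots$ of $G$ by finite symmetric sets, and any Følner sequence $(\Psi_N)$ in $G$. Inductively, set $F_N=\Psi_N+g_N$, where $g_N$ is chosen to avoid a finite set of forbidden values; this is possible because $G$ is infinite. We require:
\begin{enumerate}
\item $F_N\cap \bigl(2F_N+K_N\bigr)=\emptyset$. It suffices that $g_N\notin \Psi_N-2\Psi_N-K_N$, since $2F_N=2\Psi_N+2g_N$ and so the condition reads $g_N\notin \Psi_N-2\Psi_N-K_N$.
\item $F_N$ is disjoint from $2F_M+K_N$ for all $M<N$.
\item $2F_N+K_N$ is disjoint from $F_M$ for all $M<N$.
\end{enumerate}
For (2) and (3), note that as $g_N$ varies, $F_N$ and $2F_N$ move by $g_N$ and $2g_N$. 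Each condition therefore excludes $g_N$ from a finite set, or from finitely many cosets of $\ker D$; the latter are finite because $r<\infty$. We also make $F_N$ disjoint from the earlier $F_M$, and choose the $\Psi_N$ with $|\Psi_N|$ growing fast, so that $\diff_\Phi(A)=1$.

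Translates of Følner sets are Følner, and $\Phi/2$ is Følner because $[G:2G]<\infty$ and $r<\infty$, as used in the statement of Theorem \ref{main theorem}. The conditions give that $F_N/2\cap F_M=\emptyset$ for all $M$: the case $M\le N$ follows from (1) and (3), and the case $M>N$ from (2) applied at stage $M$. Hence $\diff_{\Phi/2}(A)=0$, and passing to a subsequence is harmless.

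\textbf{No sumsets.} Suppose $t+B+B\subset A$ with $B$ infinite. Fix $b_1\in B$. For $b\in B$ we have $t+2b\in A$ and $t+b_1+b\in A$. Write $t+b_1+b\in F_{m}$, so that
$$t+2b\in 2F_m-t-2b_1 .$$
Let $k=-t-2b_1$, which is a fixed element. Since $B$ is infinite, the indices $m=m(b)$ and $n=n(b)$ with $t+2b\in F_n$ are unbounded; here we use that each $F_N$ is finite and that $\ker D$ is finite, so only finitely many $b$ share the same $2b$. Thus for large $m,n$, with $k\in K_{\max(m,n)}$, we get $F_n\cap(2F_m+k)\neq\emptyset$. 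This contradicts (1) if $n=m$, (2) if $m<n$, and (3) if $n<m$.

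The main obstacle I expect is the bookkeeping in the inductive choice of $g_N$. One must check carefully that every forbidden set is finite, and this is exactly where $r<\infty$ enters, since equations like $2g_N\in S$ have finitely many solutions. One must also check that the unboundedness of the indices $m(b)$ and $n(b)$ uses finiteness of the kernel correctly.
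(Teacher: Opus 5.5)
Your argument is correct, and it takes a different, self-contained route from the paper's. The paper builds nothing new. It invokes \cite[Corollary 5.6]{charamaras_kousek_mountakis_radic2025BBingroups}, which supplies a non-q.i.d.\ \Folner\ sequence $\Phi$ and a set $A\subset\bigcup_N\Phi_N$ with $\overline{\diff}_{\Phi}(A)=1$ and no $t+B+B\subset A$. It then reads off from the construction in \cite[Lemma 5.5]{charamaras_kousek_mountakis_radic2025BBingroups} that also $\overline{\diff}_{\Phi/2}(A)=0$, and passes to a subsequence so that both densities exist.

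You instead use the freedom to choose $\Phi$, and take $A=\bigcup_N F_N$ in full. Your conditions (1)--(3) say that, for each fixed $k$, the set $\{x\in A\colon 2x+k\in A\}$ lies in $\bigcup_{m<j_0(k)}F_m$, hence is finite. With $k=-t-2b_1$, the set $t+b_1+B$ is an infinite subset of it, so no $t+B+B$ fits in $A$. This gives $\diff_\Phi(A)=1$ and $\diff_{\Phi/2}(A)=0$ exactly, with no subsequence. It also shows exactly where $r<\infty$ enters: $D^{-1}$ of a finite set is finite when you choose $g_N$.

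The paper's route is shorter, and it ties the example to the stronger fact from \cite{charamaras_kousek_mountakis_radic2025BBingroups} that \emph{every} non-q.i.d.\ sequence carries a full-density set without such sumsets. However, it leaves $\overline{\diff}_{\Phi/2}(A)=0$ to an inspection of that construction, whereas yours proves everything from scratch.

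A few slips to fix, none affecting validity:
\begin{enumerate}
\item Since $F_N/2\cap F_M\neq\emptyset$ if and only if $F_N\cap 2F_M\neq\emptyset$, the case $M<N$ uses (2) at stage $N$, and the case $M>N$ uses (3) at stage $M$. You have these swapped.
\item You should assume $0\in K_N$, at least for large $N$, so that (1)--(3) also cover $k=0$.
\item The fast growth of $|\Psi_N|$ is unnecessary, because $F_N\subset A$ already gives $|A\cap F_N|/|F_N|=1$ for every $N$.
\item In the sumset step you only need $m(b)$ to be unbounded. This follows because $t+b_1+B$ is infinite and each $F_m$ is finite. Neither $n(b)$ nor the finiteness of $\ker D$ plays a role there.
\end{enumerate}
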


The second question asks for the strongest 
possible notion of optimality for the bounds in our main 
result.

\begin{question}[{cf. {\cite[Question 1.6]{charamaras_kousek_mountakis_radic2025BBingroups}}}] \label{open_qu_0}
Let $G$ be a countable abelian group with $\ell=[G:2G]<\infty$ and 
$r=|\ker(D)|<\infty$, and let $\Phi$ be a \Folner{} sequence in $G$. Does there exist a set $A\subset G$ with 
$\diff_{\Psi}(A)+\diff_{\Psi/2}(A)=1$, along some subsequence $\Psi$ of $\Phi$, and such that $t+B+B \not \subset A$ for any infinite $B\subset G$ and $t\in G$?
\end{question}

Answering Question \ref{open_qu_0} is probably very 
difficult in full generality (unless the answer is negative 
and we are missing an easy 
counterexample), but we can address it in the 
integers along sequences of intervals. Indeed, this relates 
to the second reason why Theorem \ref{main theorem} was 
found. An old Problem of C.J. Owings \cite[Problem E2494]{Owing_problems} asks whether, for any colouring (partition) of $\N$ into 
two sets, 
one of the sets must contain $B+B$ for some infinite 
$B\subset \N$. This problem was studied by various authors 
(see, for example, \cite{Owings1, Owings2}) and is rather interesting because
Hindman \cite{Hindman79_b} showed the analogue for 
three-colourings to have a negative answer. Very recently it 
was resolved in the nice work of Huang, Lian, 
Shao, Xiao, Xu and Zhang \cite{owings_solution}.  

In the 
author's effort to address this problem with some type of a 
density argument (which was motivated by the joint work with 
Radi{\'c} \cite{kousek_radic2025BB}), it was observed that if 
Theorem \ref{main theorem} was not optimal, Owings' 
question would have a positive answer. The details are 
explained in Section \ref{Owing}. 

Here, now, is the special case of Question 
\ref{open_qu_0} we can address. 

\begin{proposition}\label{optimality in Z intro}
Let $\Phi$ be any \Folner\ sequence of intervals in $\Z$. 
Then, there 
exists a set $A\subset \Z$ such that 
$\diff_{\Psi}(A)+\diff_{\Psi/2}(A)=1$, 
where $\Psi$ is a subsequence of $\Phi$ along which the 
densities exist, for which $t+B+B \not \subset A$, for any 
infinite set $B$ 
and any $t\in \Z$.
\end{proposition}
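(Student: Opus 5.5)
The plan is to split according to whether $\Phi$ is quasi-invariant with respect to doubling.

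\emph{Non-q.i.d.\ case.} Suppose $\alpha_{\Phi'}=0$ for some subsequence $\Phi'$ of $\Phi$. Then \cite[Proposition 5.4]{charamaras_kousek_mountakis_radic2025BBingroups} gives a set $A$ with $\diff_{\Phi'}(A)=1$ that contains no $t+B+B$. Pass to a further subsequence $\Psi$ along which $\diff_{\Psi/2}(A)$ exists. Theorem \ref{main theorem}\eqref{main_theorem_1_2} forces $\diff_\Psi(A)+\diff_{\Psi/2}(A)\le 1$, so $\diff_{\Psi/2}(A)=0$ and the sum equals exactly $1$.

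\emph{Remaining case.} Here $\Phi_N=[a_N,b_N]$ with $L_N=b_N-a_N\to\infty$. I would build a symmetric set ($x\in A\iff -x\in A$) of "alternating dyadic" type:
\[
A=\{x\neq 0:\ \lfloor \log_2|x|-\theta(|x|)\rfloor \text{ is even}\}\setminus S,
\]
where $\theta$ is a slowly varying phase and $S$ is a thin set of gaps placed at the starts and ends of the blocks.

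\emph{No sumsets.} The heuristic is that $A$ and $A/2$ are robustly disjoint far from $0$, while $A\cup (A/2)$ covers almost everything. Suppose $t+B+B\subset A$ with $B$ infinite. Pass to an infinite subsequence of $B$ of one sign and very lacunary growth. For $b<b'$ in $B$, both $t+b+b'$ and $t+2b'$ lie in $A$. Since $b/b'$ and $t/b'$ are tiny, $t+b+b'$ lies in the same block as $b'$ unless it falls in a gap, where it is not in $A$. Likewise $t+2b'$ lies in the block of $2b'$, whose parity is opposite because $\theta(2x)\approx\theta(x)$, or again in a gap. Either way one of the two points misses $A$. The gaps must have multiplicative width $\varepsilon_m\to 0$ but still exceed the relative errors $b/b'$ and $|t|/b'$; lacunarity of $B$ makes this possible for every fixed $t$.

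\emph{Density count.} Each $y\in\Phi_N/2$ corresponds to $2y\in\Phi_N$, so $\diff_{\Phi/2}(A)$ is, up to $o(1)$, the relative density in $\Phi_N$ of $\{x: x/2\in A\}$. By the complementarity just described, $\mathbf 1_A(x)+\mathbf 1_A(x/2)=1$ for all $x\in\Phi_N$ except those near $0$, in $S$, or in $2S$. Hence along any subsequence $\Psi$ where both densities exist, $\diff_\Psi(A)+\diff_{\Psi/2}(A)=1$, provided the exceptional set has relative density $o(1)$ in $\Phi_N$.

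\emph{Main obstacle.} I expect the hard step to be ensuring that the gaps $S\cup 2S$ are negligible in the chosen $\Phi_{N_k}$.
\begin{itemize}
\item If $L_N$ is comparable to $\max(|a_N|,|b_N|)$, this is automatic, since gaps of multiplicative width $\varepsilon_m\to0$ occupy vanishing proportion.
\item If $L_N=o(|a_N|)$, a short interval could sit entirely inside a gap. This is where the freedom in $\theta$ enters. First pass to a subsequence $N_k$ whose intervals live at rapidly separating scales. Then choose $\theta$ inductively, piecewise constant with tiny increments changing only between consecutive scales, so that every block boundary $2^{m+\theta}$ lies at distance much larger than $L_{N_k}$ from $\Phi_{N_k}$ and $\Phi_{N_k}/2$.
\end{itemize}
Since the increments of $\theta$ are tiny and occur only across widely separated scales, $\theta(2x)=\theta(x)$ except on a sparse set of scales. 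There one can add extra gaps, also kept away from the $\Phi_{N_k}$, preserving both the robust disjointness and the $o(1)$ density loss. Passing to a final subsequence $\Psi$ along which the densities exist then completes the argument.
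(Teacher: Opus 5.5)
Your argument is correct, but it is organised differently from the paper's. The paper does not split into cases. For an arbitrary interval sequence it builds $F_N=\bigcup_{j\le k_N}(\Phi_N\setminus\Phi_N/2)/2^{2j}$ and checks that $F_N\cap F_N/2=\emptyset$, so $F$ is not q.i.d. A telescoping computation, which relies on the interval structure, then gives $|F_N\cap\Phi_N|/|\Phi_N|+|F_N\cap\Phi_N/2|/|\Phi_N/2|=1+o(1)$. Finally it applies \cite[Proposition 5.4]{charamaras_kousek_mountakis_radic2025BBingroups} to $F$ to obtain a full-density subset with no $t+B+B$. So in the paper the avoidance of sumsets is always outsourced, and the real work is fitting a non-q.i.d.\ sequence to $\Phi$.

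You instead apply Proposition 5.4 to $\Phi$ itself whenever $\Phi$ has a non-q.i.d.\ subsequence. Your use of Theorem \ref{main theorem}\eqref{main_theorem_1_2} to force $\diff_{\Psi/2}(A)=0$ is a neat shortcut that needs neither intervals nor $\Z$. It answers Question \ref{open_qu_0} positively for every \Folner\ sequence, in any group with $\ell,r<\infty$, that has a non-q.i.d.\ subsequence. In the q.i.d.\ case you replace Proposition 5.4 by an explicit set that does not depend on $\Phi$. You verify by hand that it avoids sumsets, essentially using $2(t+b+b')-(t+2b')=t+2b$ against gaps of relative width $\varepsilon_m\to 0$. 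This set works along every subsequence simultaneously.

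One simplification: the obstacle you single out never arises. If $0<a_N$, the interval $[a_N,b_N]$ meets its half only when $b_N\ge 2a_N$, and then $L_N\ge b_N/2$. The case $b_N<0$ is symmetric, and the claim is trivial if $0\in\Phi_N$. So in your remaining case $\alpha_\Phi>0$ you always have $L_N\ge\frac12\max(|a_N|,|b_N|)$. Short intervals far from $0$ are disjoint from their halves, so they were already handled by your first case. You can therefore drop the second bullet and the phase entirely and take $\theta\equiv 0$. That is, $A$ is the symmetrisation of $\bigcup_m[4^m,2\cdot 4^m)$ with shrinking gaps at the block ends, which is exactly your first-bullet situation.
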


Finally, we want to briefly explain the assumptions that 
$[G:2G]<\infty$ and $r=|\ker(D)|<\infty$ in all our results. 
In 
\cite[Section 5.1]{charamaras_kousek_mountakis_radic2025BBingroups} it was 
shown that there exists an abelian group with infinite 
$\ker(D)$, and a set of full density along a \Folner{} 
sequence that is q.i.d., which contains no infinite sumsets 
$B+B$. It was also proved in \cite[Section 5.3]{charamaras_kousek_mountakis_radic2025BBingroups} that when $2G$ is infinite and $[G:2G]=\infty$, Theorem \ref{CKMR} 
fails even with density $1$. The second assumption was also 
imposed by the results in \cite{ackelsberg2024counterexamples}. Since these assumptions 
were already known to be necessary for Theorem \ref{CKMR}, they have to be assumed for Theorem \ref{main theorem} to hold as well, in view of the relation between the two results (Proposition \ref{implication of main results}).

\vspace{2mm} 
\noindent
\textbf{Acknowledgements.} 
The author would like to thank Joel Moreira and Trist{\'a}n 
Radi{\'c} for suggestions that improved the exposition.

\section{Theorem \ref{main theorem} implies Theorem \ref{CKMR}}\label{section 2}

As was mentioned before, $G$ always denotes a countable 
abelian group with $\ell=[G: 2G]< \infty$. 
We also fix $g_1, \ldots, g_{\ell}\in G$ so that 
$G=\bigsqcup_{i=1} ^{\ell} 2G+g_i$. We omit these assumptions 
from the statements of this section. 

The main purpose of this section is to prove that our main 
result implies the main result of 
\cite{charamaras_kousek_mountakis_radic2025BBingroups}, Theorem \ref{CKMR}. Since 
the conclusion of both theorems is the same, this amounts to 
showing that the assumptions of Theorem \ref{main theorem} are weaker 
than those of Theorem \ref{CKMR}. We begin by showing
in Proposition \ref{prop equiv formulation} 
that the three statements of Theorem \ref{main theorem} are
equivalent, in analogy to 
\cite{charamaras_kousek_mountakis_radic2025BBingroups}. This 
is also useful when showing that the bounds are optimal, 
because we only need to do this for one of the bounds. 

We recall three auxiliary results that were established in \cite{charamaras_kousek_mountakis_radic2025BBingroups}. 

\begin{lemma} \label{lemma reductions no shift}
        Let $A \subset G$. The following are equivalent:
    \begin{enumerate}
        \item \label{lemma reductions no shift_1} $A\supset B+B$ for some 
        infinite set $B\subset G$.
        \item \label{lemma reductions no shift_2} $A \cap 2G \supset B+B$ for some infinite set $B\subset G$.
        \item \label{lemma reductions no shift_3} $(A \cap 2G) \cup(G \backslash 2G) \supset B+B$ for some infinite set $B\subset G$.
    \end{enumerate}
\end{lemma}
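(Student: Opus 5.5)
The plan is to prove the cycle of implications \eqref{lemma reductions no shift_1}$\Rightarrow$\eqref{lemma reductions no shift_2}$\Rightarrow$\eqref{lemma reductions no shift_3}$\Rightarrow$\eqref{lemma reductions no shift_1}. The only tool needed is a pigeonhole argument over the finitely many cosets $2G+g_1,\ldots,2G+g_\ell$, together with the observation that any two elements of the same coset of $2G$ sum into $2G$.

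First I record the key observation. If $b,b'\in 2G+g_i$, write $b=2x+g_i$ and $b'=2y+g_i$. Then $b+b'=2(x+y+g_i)\in 2G$. Hence if $B'\subset 2G+g_i$ for a single $i$, then $B'+B'\subset 2G$.

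For \eqref{lemma reductions no shift_1}$\Rightarrow$\eqref{lemma reductions no shift_2}, let $B$ be infinite with $B+B\subset A$. Since $G=\bigsqcup_{i=1}^{\ell}(2G+g_i)$ with $\ell<\infty$, the pigeonhole principle gives some $i$ for which $B'=B\cap(2G+g_i)$ is infinite. Then $B'+B'\subset B+B\subset A$, and by the observation $B'+B'\subset 2G$. So $B'+B'\subset A\cap 2G$.

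The implication \eqref{lemma reductions no shift_2}$\Rightarrow$\eqref{lemma reductions no shift_3} is immediate, since $A\cap 2G\subset (A\cap 2G)\cup(G\setminus 2G)$.

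For \eqref{lemma reductions no shift_3}$\Rightarrow$\eqref{lemma reductions no shift_1}, take $B$ infinite with $B+B\subset (A\cap 2G)\cup(G\setminus 2G)$. Pass again to an infinite $B'\subset B$ lying in a single coset $2G+g_i$. Then $B'+B'\subset 2G$, so $B'+B'$ misses $G\setminus 2G$. It follows that $B'+B'\subset A\cap 2G\subset A$.

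There is no real obstacle. The only point to check is that finiteness of $\ell=[G:2G]$ is exactly what makes the pigeonhole step work; the hypothesis on $\ker(D)$ is not needed for this lemma.
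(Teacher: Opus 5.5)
Your proof is correct: passing to an infinite $B'\subset B$ inside a single coset $2G+g_i$, so that $B'+B'\subset 2G$, handles both nontrivial implications, and you are right that only $[G:2G]<\infty$ is used, not $|\ker(D)|<\infty$. The paper does not prove this lemma itself but recalls it from \cite{charamaras_kousek_mountakis_radic2025BBingroups}, and your coset-pigeonhole argument is the standard proof one would expect there.
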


\begin{lemma} \label{lemma reductions with shift}
Let $A \subset G$. The following are equivalent:
\begin{enumerate}
    \item \label{lemma reductions with shift_1} $A\supset t+B+B$ for some $t \in G$ and infinite $B \subset G$.
    \item \label{lemma reductions with shift_2} $A \supset B+B + g_i$ for some $i \in \{1, \ldots, \ell\}$ and infinite 
    $B \subset G$.
    \item \label{lemma reductions with shift_3} $(A - g_i) \cap 2G  \supset B+B$ for some $i \in \{1, \ldots, \ell\}$ and infinite 
    $B \subset G$.
\end{enumerate}
\end{lemma}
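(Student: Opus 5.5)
We prove the cycle (1)$\Rightarrow$(2)$\Rightarrow$(3)$\Rightarrow$(1), using only the coset decomposition $G=\bigsqcup_{i=1}^{\ell}(2G+g_i)$ and the fact that $B+B$ meets only finitely many cosets of $2G$ in a controlled way.

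\emph{(2)$\Rightarrow$(1)} is immediate: take $t=g_i$.

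\emph{(1)$\Rightarrow$(2).} Suppose $t+B+B\subset A$ with $B$ infinite.
\begin{itemize}
\item Since $[G:2G]=\ell<\infty$, the pigeonhole principle gives an infinite $B'\subset B$ lying in a single coset $2G+g_j$.
\item For $b,b'\in B'$ we then have $b+b'\in 4G+2g_j\subset 2G$, since $2g_j\in 2G$. Hence $t+B'+B'\subset t+2G$.
\item Write $t=2s+g_i$ for some $i$ and some $s\in G$. Set $B''=s+B'$, which is still infinite.
\item Then $B''+B''+g_i = 2s+g_i+B'+B' = t+B'+B'\subset A$.
\end{itemize}
This is (2).

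\emph{(2)$\Leftrightarrow$(3).} By the same pigeonhole step, we may assume $B$ lies in one coset, so $B+B\subset 2G$. The inclusion $B+B+g_i\subset A$ is equivalent to $B+B\subset A-g_i$. Since $B+B\subset 2G$, this is in turn equivalent to $B+B\subset (A-g_i)\cap 2G$. For the direction (3)$\Rightarrow$(2) no reduction is even needed: $(A-g_i)\cap 2G\supset B+B$ gives $A\supset B+B+g_i$ directly.

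The only real step is the coset refinement that makes $B+B\subset 2G$, together with absorbing the $2G$-part $2s$ of the shift $t$ into $B$ by translating. It is not a serious obstacle; the main care needed is to check that translating $B$ by $s$ shifts $B+B$ by exactly $2s$.
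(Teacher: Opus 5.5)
Your proof is correct and follows the standard argument. The paper does not reprove this lemma but cites it from \cite{charamaras_kousek_mountakis_radic2025BBingroups}, where it is handled the same way: absorb the $2G$-part of $t=2s+g_i$ into $B$ by translation, and pass to an infinite subset of $B$ lying in one coset of $2G$ so that $B+B\subset 2G$. One small slip: for $b=2x+g_j$ and $b'=2y+g_j$ you get $b+b'=2(x+y)+2g_j$, which lies in $2G$ but not necessarily in $4G+2g_j$ (e.g.\ $1+3=4\notin 4\Z+2$); your conclusion $b+b'\in 2G$ is still right, and the pigeonhole step is not actually needed for (1)$\Rightarrow$(2).
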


\begin{lemma}\label{lemma aux folner 2}
For any \Folner{} sequence $\Phi=(\Phi_N)_{N\in \N}$ in $G$ we have that 
$\frac{|\Phi_N/2|}{|\Phi_N \cap 2G|}= r + \oh_{N\to \infty} (1) $ 
and $\frac{|\Phi_N/2|}{|\Phi_N|}= \frac{r}{\ell} + \oh_{N\to \infty} (1)$.    
\end{lemma}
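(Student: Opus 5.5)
We need to prove Lemma \ref{lemma aux folner 2}: for any \Folner\ sequence $\Phi$, $|\Phi_N/2| / |\Phi_N\cap 2G| \to r$ and $|\Phi_N/2|/|\Phi_N|\to r/\ell$.

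The first ratio is just a fibre count. The doubling map $D$ is a homomorphism whose fibres over points of $2G$ are cosets of $\ker(D)$, so they have exactly $r$ elements. Points outside $2G$ have empty fibre. Hence
\[
|\Phi_N/2| = |D^{-1}(\Phi_N)| = r\,|\Phi_N\cap 2G|
\]
holds exactly for every $N$ (when $\Phi_N\cap 2G\neq\emptyset$). This gives the first statement with no error term, and reduces the second to showing
\[
\frac{|\Phi_N\cap 2G|}{|\Phi_N|}\to \frac1\ell .
\]

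For the equidistribution, I use the coset decomposition $G=\bigsqcup_{i=1}^{\ell}(2G+g_i)$ fixed at the start of the section. Translation by $g_i$ maps $2G$ bijectively onto $2G+g_i$, so
\[
|\Phi_N\cap(2G+g_i)| = |(\Phi_N-g_i)\cap 2G|.
\]
By the \Folner\ property, $|\Phi_N\,\triangle\,(\Phi_N-g_i)| = \oh(|\Phi_N|)$. Therefore
\[
|\Phi_N\cap(2G+g_i)| = |\Phi_N\cap 2G| + \oh(|\Phi_N|)
\]
for each of the finitely many $i$. Summing over $i=1,\dots,\ell$ gives $|\Phi_N| = \ell\,|\Phi_N\cap 2G| + \oh(|\Phi_N|)$, which is exactly the required limit. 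Finiteness of $\ell$ is used here, since it lets us sum finitely many $\oh$ terms.

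Combining the two steps:
\[
\frac{|\Phi_N/2|}{|\Phi_N|} = r\cdot\frac{|\Phi_N\cap 2G|}{|\Phi_N|} = \frac r\ell + \oh(1).
\]
The main subtlety is the degenerate case where $\Phi_N\cap 2G$ is empty in the first ratio. This is harmless, because the second step shows $|\Phi_N\cap 2G|\ge |\Phi_N|/(2\ell)>0$ for all large $N$, so the ratio is defined eventually. Nothing else is an obstacle; $r<\infty$ is used only so that the fibre count is a finite constant.
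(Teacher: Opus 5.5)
Your proof is correct. The exact fibre count $|\Phi_N/2| = r\,|\Phi_N\cap 2G|$, which is stronger than the stated $r+\oh_{N\to\infty}(1)$, combined with the equidistribution of a \Folner\ sequence among the $\ell$ cosets of $2G$, gives both claims. The paper does not prove this lemma; it quotes it from \cite{charamaras_kousek_mountakis_radic2025BBingroups}. Your argument is the standard one, and your equidistribution step $|\Phi_N\cap 2G|/|\Phi_N|\to 1/\ell$ is exactly the fact the paper later cites as \cite[Lemma 5.4]{Charamaras_Mountakis_2025} in the proof of Lemma \ref{correspondence}.
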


\begin{proposition} \label{prop equiv formulation}
    Let $(G,+)$ be a countable abelian group with $\ell=[G:2G]<\infty$. Let $A\subset G$ and $\Phi$ be any \Folner{} sequence in $G$. Then the following statements are equivalent:
\begin{enumerate}
\item  \label{prop equiv formulation_1}
If 
$\diff_\Phi(A)+ \diff_{\Phi/2}(A)> 1,$
there exist an infinite set $B\subset G$ and some $t\in G$ such that 
$t+B+B\subset A$. 
\item  \label{prop equiv formulation_2} 
\vspace{2mm} 
If 
$\diff_\Phi(A) + \diff_{\Phi/2}(A)> \frac{2\ell-1}{\ell},$
there exists an infinite set $B\subset G$ such that $B+B\subset A$. 
\item   \label{prop equiv formulation_3}  \vspace{2mm} If 
$\diff_\Phi(A\cap 2G)+\diff_{\Phi/2}(A\cap 2G) > \frac{1}{\ell},$
there exists an infinite set $B\subset G$ such that $B+B\subset A$.
\end{enumerate} 
\end{proposition}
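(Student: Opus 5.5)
We read the proposition as asserting that the three implications (1), (2), (3) are equivalent, each quantified over all $A\subset G$ and all \Folner{} sequences $\Phi$. The plan is to prove (1)$\Rightarrow$(3)$\Rightarrow$(2)$\Rightarrow$(3)$\Rightarrow$(1), i.e.\ (3)$\Leftrightarrow$(1) and (3)$\Leftrightarrow$(2).

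\emph{Preliminary facts.}
\begin{enumerate}
\item $\Phi/2=(\Phi_N/2)_N$ is again a \Folner{} sequence. For $g\in G$ we have $(g+\Phi_N/2)\,\triangle\,\Phi_N/2\subset D^{-1}\bigl((2g+\Phi_N)\triangle\Phi_N\bigr)$. This set has size at most $r\,|(2g+\Phi_N)\triangle \Phi_N|=\oh(|\Phi_N|)$. By Lemma \ref{lemma aux folner 2}, $|\Phi_N/2|\sim \frac{r}{\ell}|\Phi_N|$, so this is $\oh(|\Phi_N/2|)$.
\item Along any \Folner{} sequence $\Psi$ each coset $2G+g_i$ has density $1/\ell$. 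Densities are translation invariant, so $\diff_\Psi\bigl((A-g_i)\cap 2G\bigr)=\diff_\Psi\bigl(A\cap(2G+g_i)\bigr)$ whenever one side exists.
\item Since the conclusions do not depend on $\Phi$, we may pass to a subsequence of $\Phi$ along which all finitely many relevant densities exist, for both $\Phi$ and $\Phi/2$. The hypotheses survive this because the original densities exist.
\end{enumerate}

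\emph{(3)$\Rightarrow$(2) and (2)$\Rightarrow$(3).} Suppose $\diff_\Phi(A)+\diff_{\Phi/2}(A)>\frac{2\ell-1}{\ell}$. Removing the complement of $2G$, which has density $\frac{\ell-1}{\ell}$ along both sequences, gives
\[
\diff_\Phi(A\cap 2G)+\diff_{\Phi/2}(A\cap 2G)>\tfrac{2\ell-1}{\ell}-\tfrac{2(\ell-1)}{\ell}=\tfrac1\ell ,
\]
so (3) applies. Conversely, given the hypothesis of (3), set $A'=(A\cap 2G)\cup(G\setminus 2G)$. Its two densities are those of $A\cap 2G$ increased by $\frac{\ell-1}{\ell}$ each, so their sum exceeds $\frac{2\ell-1}{\ell}$. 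Statement (2) then gives $B+B\subset A'$. Lemma \ref{lemma reductions no shift} converts this into an infinite $B'$ with $B'+B'\subset A$.

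\emph{(3)$\Rightarrow$(1).} By fact 2, summing over $i$ gives
\[
\sum_{i=1}^{\ell}\Bigl[\diff_\Phi\bigl((A-g_i)\cap 2G\bigr)+\diff_{\Phi/2}\bigl((A-g_i)\cap 2G\bigr)\Bigr]=\diff_\Phi(A)+\diff_{\Phi/2}(A)>1 .
\]
By pigeonhole some $i$ has bracket $>1/\ell$. Applying (3) to $A-g_i$ gives $B+B\subset A-g_i$, i.e.\ $g_i+B+B\subset A$.

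\emph{(1)$\Rightarrow$(3).} This is the step where some care is needed, because (1) produces an uncontrolled shift $t$ that must be removed. Given the hypothesis of (3), build
\[
A^*=\bigcup_{i=1}^{\ell}\bigl((A\cap 2G)+g_i\bigr).
\]
By translation invariance each piece contributes the same combined density as $A\cap 2G$. Hence the combined density of $A^*$ is $\ell$ times a quantity exceeding $1/\ell$, so it exceeds $1$. Statement (1) gives $t+B+B\subset A^*$ with $B$ infinite.

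Next we remove the shift $t$:
\begin{enumerate}
\item Since $[G:2G]<\infty$, we may pass to an infinite subset of $B$ lying in a single coset $c+2G$.
\item Then $B+B\subset 2c+2G$, so $t+B+B$ lies in the single coset $t+2G$, which equals $2G+g_i$ for some $i$.
\item Therefore $t+B+B\subset A^*\cap(2G+g_i)=(A\cap 2G)+g_i$, that is, $(t-g_i)+B+B\subset A\cap 2G$.
\item Both $t-g_i\in 2G$ and $t+2G=g_i+2G$ hold, so we may write $t-g_i=2s$ for some $s\in G$.
\item Then $(s+B)+(s+B)\subset A$ with $s+B$ infinite, which is the conclusion of (3).
\end{enumerate}

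I expect the main obstacle to be this last shift-removal argument, together with the bookkeeping on existence of densities when passing to subsequences. Everything else is linear arithmetic with coset densities.
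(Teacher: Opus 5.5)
Your proposal is correct and follows essentially the paper's route. For (1)$\Rightarrow$(3) you use the same auxiliary set $\bigsqcup_{i}\bigl((A\cap 2G)+g_i\bigr)$, and your by-hand shift removal simply inlines Lemmas \ref{lemma reductions with shift} and \ref{lemma reductions no shift}; (3)$\Rightarrow$(1) is the same coset pigeonhole, and your explicit (2)$\Leftrightarrow$(3) argument via $(A\cap 2G)\cup(G\setminus 2G)$ fills in the part the paper delegates to \cite[Proposition 2.3]{charamaras_kousek_mountakis_radic2025BBingroups}.
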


\begin{proof}
The proof is very similar to that of \cite[Proposition 2.3]{charamaras_kousek_mountakis_radic2025BBingroups}, and as an indicative example we prove the equivalence between \eqref{prop equiv formulation_1} and \eqref{prop equiv formulation_3}.

\eqref{prop equiv formulation_1}$\implies$\eqref{prop equiv formulation_3}: If $\diff_{ \Phi}(A \cap 2G )+\diff_{ \Phi/2}(A \cap 2G ) > \frac{1}{\ell}$, we let $\Tilde{A} = \bigsqcup_{i=1}^{\ell} (A \cap 2G) + g_i$, and the invariance of $\Phi$ and $\Phi/2$ gives that $\diff_{ \Phi}(\Tilde{A} )+\diff_{ \Phi/2}(\Tilde{A} ) > 1$. By \eqref{prop equiv formulation_1}, $\Tilde{A}$ contains a sumset of the form $t+B+B$ for some infinite $ B \subset G$ and $t \in G$.
    By Lemma \ref{lemma reductions with shift} we can reduce to the case where $t= g_i$ for some $i \in \{1, \ldots, \ell\}$. Using 
    Lemma \ref{lemma reductions no shift}, we can also assume that $B+B \subset 2G$. Therefore, we have that $B+B+ g_i \subset (A \cap 2G) + g_i$ which concludes the proof.

    \eqref{prop equiv formulation_3}$\implies$\eqref{prop equiv formulation_1}: Suppose $\diff_{ \Phi}(A )+\diff_{ \Phi/2}(A) > 1$. By sub-additivity of the density, 
$$\diff_{ \Phi}(A)+\diff_{ \Phi/2}(A) \leq \sum_{i=1}^{\ell} \diff_{ \Phi}(A \cap (2G + g_i) )+\diff_{ \Phi/2}(A \cap (2G + g_i) ), $$ so there is $i \in \{1, \ldots, \ell\}$ for which     
    $\diff_{ \Phi}(A \cap (2G + g_i) )+\diff_{ \Phi/2}(A \cap (2G + g_i) )  >\frac{1}{\ell}$. 
    By translation invariance
    of the density and \eqref{prop equiv formulation_3}, it follows that $A$ contains $B+B+g_i$ for some $ B \subset G$ infinite.  
\end{proof}

We now prove the main result of this section. 

\begin{proposition}\label{implication of main results}
Theorem \ref{main theorem} implies Theorem \ref{CKMR}.    
\end{proposition}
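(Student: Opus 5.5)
By \Cref{prop equiv formulation} and the corresponding equivalence of the three parts of Theorem \ref{CKMR} established in \cite{charamaras_kousek_mountakis_radic2025BBingroups}, it suffices to show that part \eqref{main_theorem_1_2} of Theorem \ref{main theorem} implies part \eqref{main_theorem_1_2'} of Theorem \ref{CKMR}. Parts \eqref{main_theorem_1_1'} and \eqref{main_theorem_evens'} could also be handled directly by the same computation applied to $A\cap 2G$ with the threshold scaled by $1/\ell$.

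So let $\Phi$ be q.i.d. with ratio $\alpha=\alpha_\Phi$, and write $\delta=\overline{\diff}_\Phi(A)>1-\frac{\ell\alpha}{\ell+r}$. I pass to a subsequence $\Psi=(\Phi_{N_k})_k$ along which the following hold:
\begin{itemize}
\item $|A\cap\Phi_{N_k}|/|\Phi_{N_k}|\to\delta$;
\item $\diff_{\Psi/2}(A)$ exists;
\item $|\Phi_{N_k}/2\cap\Phi_{N_k}|/|\Phi_{N_k}|\to\alpha'$ for some $\alpha'\ge\alpha$.
\end{itemize}
This is possible by compactness of $[0,1]$ and the definition of $\alpha_\Phi$ as a $\liminf$. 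The sequence $\Psi$ is still a \Folner{} sequence and $\Psi/2=(\Phi_{N_k}/2)_k$, so the target is to verify
\[
\diff_\Psi(A)+\diff_{\Psi/2}(A)>1,
\]
after which Theorem \ref{main theorem}\eqref{main_theorem_1_2} gives the conclusion.

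The key step is an upper bound on the density of the complement $A^c=G\setminus A$ along $\Psi/2$. Split $\Phi_N/2$ into $\Phi_N/2\cap\Phi_N$ and $\Phi_N/2\setminus\Phi_N$. This gives
\[
|A^c\cap \Phi_N/2|\le |A^c\cap\Phi_N| + |\Phi_N/2| - |\Phi_N/2\cap\Phi_N|.
\]
Lemma \ref{lemma aux folner 2} gives $|\Phi_N/2|=(\tfrac r\ell+\oh(1))|\Phi_N|$. Dividing by $|\Phi_N/2|$ along $N=N_k$ then yields
\[
\diff_{\Psi/2}(A^c)\le \frac{\ell}{r}\Bigl((1-\delta)+\frac r\ell-\alpha'\Bigr)=1+\frac{\ell(1-\delta)-\ell\alpha'}{r}.
\]

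It remains to check that this is $<\delta$, since then $\diff_{\Psi/2}(A)=1-\diff_{\Psi/2}(A^c)>1-\delta=1-\diff_\Psi(A)$. Multiplying through by $r$, the inequality $\diff_{\Psi/2}(A^c)<\delta$ reduces to
\[
\ell+r-\ell\alpha'<(\ell+r)\delta,
\]
that is, $\delta>1-\frac{\ell\alpha'}{\ell+r}$. This follows from the hypothesis together with $\alpha'\ge\alpha$.

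I expect no serious obstacle. The only delicate points are the following.
\begin{itemize}
\item The same subsequence must be used for both $\Phi$ and $\Phi/2$, which is exactly why we extract $\Psi$ first.
\item The hypothesis of Theorem \ref{CKMR} is on the \emph{upper} density, so we must choose $\Psi$ realizing the $\limsup$, while $\alpha'$ only needs to dominate the $\liminf$ $\alpha$.
\end{itemize}
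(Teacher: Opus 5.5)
Your proof is correct and is essentially the paper's argument: reduce to part (1) versus part (1') via the equivalences, pass to a common subsequence, and combine the overlap $\Phi_N\cap\Phi_N/2$ (of relative size at least $\alpha_\Phi$) with Lemma~\ref{lemma aux folner 2}. Your complement bound is exactly the paper's inequality $|A\cap\Phi_N/2|\ge|\Phi_N\cap\Phi_N/2|-|\Phi_N\setminus A|$ written in a different order, and your explicit remark that the limit $\alpha'$ along the subsequence satisfies $\alpha'\ge\alpha_\Phi$ spells out a point the paper leaves implicit.
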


\begin{proof}
    In view of Proposition \ref{prop equiv formulation} and \cite[Proposition 2.3]{charamaras_kousek_mountakis_radic2025BBingroups}, it suffices to show that \eqref{main_theorem_1_2} of Theorem \ref{main theorem} implies \eqref{main_theorem_1_2'} of Theorem \ref{CKMR}. As explained in the beginning of this section, we only have to prove that that the assumptions of the latter imply the assumptions of the former. 

Let $\Phi=(\Phi_N)$ be a \Folner\ sequence in $G$ with doubling ratio 
$$0<\alpha_{\Phi}=\liminf_{N\to \infty} \frac{| \Phi_N/2 \cap \Phi_N|}{|\Phi_N|}$$
and, by passing to a subsequence of $\Phi$ if necessary, 
suppose that $A\subset G$ with $\diff_{\Phi}(A)>1-\frac{\ell \alpha_{\Phi}}{\ell+r}$ and that $\diff_{\Phi/2}(A)$ exists as well. We will next verify that $\diff_{\Phi}(A)+\diff_{\Phi/2}(A)>1$.
Observe that 
$$ \left| A \cap \Phi_N \right| =  \left| A \cap \Phi_N \cap \Phi_N/2 \right| + \left| A \cap \left(\Phi_N \setminus \Phi_N/2 \right) \right| \leq \left| A \cap \Phi_N \cap \Phi_N/2 \right| + \left| \Phi_N \right| - \left| \Phi_N \cap \Phi_N/2 \right|.$$
Using the density assumption and the doubling ratio property, it follows that 
\begin{equation}\label{eq: 1}
\limsup_{N\to \infty} \frac{\left| A \cap \Phi_N \cap \Phi_N/2 \right|}{\left| \Phi_N \right|} > \alpha_{\Phi}\left(1-\frac{\ell}{\ell+r}\right)= \left(\frac{r \alpha_{\Phi}}{\ell+r}\right).     
\end{equation}
Using Lemma \ref{lemma aux folner 2}, we see that 
$$\diff_{\Phi/2}(A)= \lim_{N\to \infty} \frac{\left| A\cap \Phi_N/2 \right|}{ \left| \Phi_N/2 \right|}=\lim_{N\to \infty} \frac{\ell \left| A\cap \Phi_N/2 \right|}{r \left| \Phi_N \right|}.$$
Hence, in view of \eqref{eq: 1}, we have that
\begin{align*}
& \diff_{\Phi}(A)+\diff_{\Phi/2}(A)> 1-\frac{\ell \alpha_{\Phi}}{\ell+r} + \lim_{N\to \infty} \frac{\ell \left| A\cap \Phi_N/2 \right|}{r \left| \Phi_N \right|} \\ & \geq  1-\frac{\ell \alpha_{\Phi}}{\ell+r} + \limsup_{N\to \infty} \frac{\ell \left| A\cap \Phi_N \cap \Phi_N/2 \right|}{r \left| \Phi_N \right|} > 1-\frac{\ell \alpha_{\Phi}}{\ell+r} + \left(\frac{\ell \alpha_{\Phi}}{\ell+r}\right)=1.
\end{align*}
\end{proof}

\section{Proof of the main result}\label{sec proof}

\subsection{Ergodic background}

We first explain the essential ergodic terminology and
notation necessary to understand the statement of our main 
dynamical result which is borrowed from 
\cite{charamaras_kousek_mountakis_radic2025BBingroups}. 

By $G$ we denote a countable abelian group. Given a compact 
metric space $X=(X,d_X)$, 
a \textit{continuous action} $T=(T_g)_{g\in G}$ of $G$ on X
is a collection of continuous functions $T_g:X\to X$ such 
that 
for any $g_1, g_2 \in G$, $T_{g_1} \circ T_{g_2} = T_{g_1+g_2}$.
Given such an action, we call the pair $(X,T)$ a \textit{topological $G$-system.}

A Borel probability measure $\mu$ on $X$ is called 
\textit{$T$-invariant}, if it is invariant under $T_g$ for 
all $g\in G$. 
Then, the action $T$ on the space 
$(X,\mu)$ is called a \textit{measure-preserving $G$-action} 
and $\xmt$ is called a \textit{measure-preserving $G$-system} or \textit{G-system} for short. 

A $G$-system $\xmt$ is \textit{ergodic} 
if $ T_g ^{-1} A =A \text{ for all } g\in G$ implies that $\mu(A) \in \{0,1\},$ 
for any measurable set $A$.

Given a $G$-system $\xmt$ and a \Folner{} sequence $\Phi$, a point $a\in X$ is called 
\textit{generic with respect to $\mu$ along $\Phi$}, which we write as $a\in \gen(\mu,\Phi)$, if 
$$ \lim_{N\to \infty} \frac{1}{| \Phi_N|} \sum_{g\in \Phi_N}
\delta_{T_g a}=\mu,$$
where the limit 
is in the weak$^\ast$ topology and $\delta_x$ denotes the 
Dirac mass at $x\in X$.

We also need the dynamical counterpart of infinite 
sumsets, which are called \textit{Erd\H{o}s progressions}, 
and were originally introduced for $\Z$-actions in 
\cite{Kra_Moreira_Richter_Robertson:2023}.
Given a topological $G$-system $(X,T)$, a triple 
$(x_0,x_1,x_2) \in X^3$ is called a 
($3$-term) Erd\H{o}s progression if there exists 
an infinite sequence $(g_n)_{n\in \N}$ in $G$ (that is, the set 
$\{g_n : n\in \N\}$ is infinite)
such that $(T_{g_n} \times T_{g_n})(x_0,x_1) \xrightarrow{} (x_1,x_2)$ 
as $n\to \infty$.

The relation between these dynamical progressions and sumsets 
is captured by the following lemma. 

\begin{lemma}\cite[Lemma $3.4$]{Charamaras_Mountakis_2025} \label{EP and B+B} 
Let $(X,T)$ be a topological $G$-system and let $E,F\subset X$ be open. Assume 
there exists an Erd\H{o}s progression $(x_0,x_1,x_2) \in X^3$ with $x_1\in E$ 
and $x_2\in F$. Then, there exists an infinite sequence $B=(b_n)_{n\in \N} 
\subset \{g\in G: T_g(x_0) \in E\}$ such that $B \oplus B =
\{b_n + b_m \colon n,m \in \N, n \neq m\} \subset \{g\in G: 
T_g(x_0)\in F\}$.
\end{lemma}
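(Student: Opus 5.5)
The plan is to build the sequence $B=(b_n)$ inductively as a subsequence of the sequence $(g_k)$ witnessing the Erd\H{o}s progression. Each new term will be chosen far enough along that finitely many open conditions all hold. The only tools needed are the two convergences
\[
T_{g_k}x_0\to x_1\in E,\qquad T_{g_k}x_1\to x_2\in F,
\]
together with continuity of each individual map $T_g$ and the identity $T_{b_m+b_n}=T_{b_m}\circ T_{b_n}$.

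Along the way I will maintain three inductive hypotheses for all chosen $b_1,\dots,b_{n-1}$:
\begin{enumerate}
\item $T_{b_m}x_0\in E$;
\item $T_{b_m}x_1\in F$;
\item $T_{b_m+b_j}x_0\in F$ for all $j<m$.
\end{enumerate}
Hypothesis (2) is auxiliary and is the key to the argument: it is what makes later sums land in $F$.

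At step $n$, I pick $b_n=g_k$, where $k$ satisfies the following:
\begin{enumerate}
\item[(a)] $g_k\notin\{b_1,\dots,b_{n-1}\}$. This is possible because $\{g_k\}$ is infinite, so infinitely many indices $k$ qualify.
\item[(b)] $T_{g_k}x_0\in E$. This holds for all large $k$, since $E$ is an open neighbourhood of $x_1$.
\item[(c)] $T_{g_k}x_1\in F$. This holds for all large $k$, since $F$ is an open neighbourhood of $x_2$.
\item[(d)] $T_{b_m}(T_{g_k}x_0)\in F$ for every $m<n$. Since $T_{b_m}$ is continuous and $T_{g_k}x_0\to x_1$, we get $T_{b_m}T_{g_k}x_0\to T_{b_m}x_1$, which lies in the open set $F$ by hypothesis (2). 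So this condition also holds for all large $k$.
\end{enumerate}
Conditions (b)--(d) are finitely many, each holding for all sufficiently large $k$, and (a) excludes only finitely many values. So a suitable $k$ exists. Since $T_{b_m}T_{b_n}x_0=T_{b_m+b_n}x_0$, condition (d) is exactly the requirement $b_m+b_n\in\{g:T_gx_0\in F\}$, and the induction continues.

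The resulting set $B$ is infinite because its terms are pairwise distinct. It satisfies $B\subset\{g:T_gx_0\in E\}$ by (b), and every sum $b_m+b_n$ with $m\neq n$ lies in $\{g:T_gx_0\in F\}$ by (d), using commutativity of $G$ to order the pair.

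The only delicate point is the order of quantifiers in (d). Each new term must make all of its sums with the earlier, already fixed terms land in $F$. This works because the earlier terms are fixed, so only finitely many continuous maps $T_{b_m}$ are involved, and hypothesis (2), secured when each $b_m$ was chosen, provides the openness margin around $T_{b_m}x_1$. No uniformity over infinitely many maps is required.
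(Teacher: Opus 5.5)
Your proof is correct and is essentially the standard argument behind the cited lemma; the paper only cites it, and the source follows the route of Kra--Moreira--Richter--Robertson in $\mathbb{Z}$. That argument also picks the terms inductively from the witnessing sequence and maintains $T_{b_m}x_1\in F$, so that $E\cap\bigcap_{m<n}T_{b_m}^{-1}F$ is an open neighbourhood of $x_1$ that the next $T_{g_k}x_0$ is forced to enter.
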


Finally, the main dynamical theorem that implies Theorem 
\ref{main theorem} was proven in \cite{charamaras_kousek_mountakis_radic2025BBingroups} and is this. 

\begin{theorem}{{\cite[Theorem 2.5]{charamaras_kousek_mountakis_radic2025BBingroups}}} \label{analogue of 2.1 KR}
Let $(X,\mu,T)$ be an ergodic $G$-system, $a\in \gen(\mu,\Phi)$ for some \Folner{} sequence $\Phi$ in $G$,  
and $E_1,E_2 \subset X$ be open sets satisfying
\begin{equation}\label{1_1}
\ell \mu(E_2)+\mu(E_1) > \ell.
\end{equation}
Then, there exists an \Erdos{} progression $(a,x_1,x_2)$ such that $(x_1,x_2) \in E_1 \times E_2$.
\end{theorem}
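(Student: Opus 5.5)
The plan is to follow the Kra--Moreira--Richter--Robertson strategy for $\Z$, as extended to countable abelian groups by Charamaras and Mountakis. The idea is to build a measure on $X\times X$ that is carried by pairs $(x_1,x_2)$ for which $(a,x_1,x_2)$ is an Erd\H{o}s progression, and then to show that this measure gives $E_1\times E_2$ positive mass under \eqref{1_1}.

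\textbf{Step 1: reduction to a system with a continuous Kronecker factor map.} Let $(Z,m,R)$ be the Kronecker factor of $\xmt$, a compact abelian group with a rotation action $R_g z=z+\phi(g)$, where $\phi\colon G\to Z$ has dense image. Let $\pi\colon X\to Z$ be the factor map. As in the cited works, I would first pass to an extension $(\tilde X,\tilde\mu,\tilde T)$ and a point $\tilde a$ lying over $a$ such that:
\begin{itemize}
\item $\tilde a$ is generic along a subsequence of $\Phi$;
\item $\pi$ becomes continuous;
\item Erd\H{o}s progressions project to Erd\H{o}s progressions.
\end{itemize}
Preimages of open sets stay open and keep their measure, so this costs nothing. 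Disintegrate $\mu=\int_Z \mu_z\, dm(z)$ and define
\[
\sigma_a=\int_Z \mu_{\pi(a)+z}\times \mu_{\pi(a)+2z}\, dm(z).
\]
The key input is the analogue of \cite[Theorem 2.1]{Kra_Moreira_Richter_Robertson:2023} for $G$-actions, which I take from \cite{Charamaras_Mountakis_2025}: for $\sigma_a$-almost every $(x_1,x_2)$, the triple $(a,x_1,x_2)$ is an Erd\H{o}s progression. Its proof is the main obstacle. It relies on the genericity of $a$, an ergodic-averages argument showing that $\sigma_a$ is ``progressive'', and, crucially, the hypothesis $[G:2G]<\infty$ to control the doubling on $Z$. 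I would quote it rather than reprove it.

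\textbf{Step 2: lower bound for $\sigma_a(E_1\times E_2)$.} Since $E_1\times E_2$ is open, it suffices to show $\sigma_a(E_1\times E_2)>0$. Let $f_i=\mathbb{E}(1_{E_i}\mid Z)$, so that
\[
\sigma_a(E_1\times E_2)=\int_Z f_1(\pi(a)+z)\, f_2(\pi(a)+2z)\, dm(z).
\]
Use $f_1f_2\ge 1-(1-f_1)-(1-f_2)$ for $0\le f_i\le 1$. The first error term integrates to $1-\mu(E_1)$. For the second, the pushforward of $m$ under $z\mapsto \pi(a)+2z$ is normalized Haar measure on a coset of the closed subgroup $2Z$, so its density with respect to $m$ is at most $[Z:2Z]$. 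Hence
\[
\int_Z \bigl(1-f_2(\pi(a)+2z)\bigr)\, dm(z)\le [Z:2Z]\,\bigl(1-\mu(E_2)\bigr).
\]

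\textbf{Step 3: index bound.} It remains to show $[Z:2Z]\le \ell$. Since $\phi(G)$ is dense and $2Z$ is closed, $2Z\supseteq \overline{\phi(2G)}$. Moreover the $\ell$ cosets $\phi(g_i)+\overline{\phi(2G)}$ are closed and cover the dense set $\phi(G)$, so they cover $Z$. Therefore $[Z:2Z]\le [Z:\overline{\phi(2G)}]\le \ell$. Combining Steps 2 and 3,
\[
\sigma_a(E_1\times E_2)\ \ge\ 1-(1-\mu(E_1))-\ell(1-\mu(E_2))\ =\ \mu(E_1)+\ell\mu(E_2)-\ell\ >\ 0
\]
by \eqref{1_1}. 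By Step 1, a $\sigma_a$-typical point of $E_1\times E_2$ then gives the required Erd\H{o}s progression $(a,x_1,x_2)$.
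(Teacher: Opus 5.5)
Your proposal is correct. Granting the Charamaras--Mountakis extension of the Kra--Moreira--Richter--Robertson result that $\sigma_a$-almost every $(x_1,x_2)$ gives an Erd\H{o}s progression $(a,x_1,x_2)$, your estimate $\sigma_a(E_1\times E_2)\ge \mu(E_1)-[Z:2Z]\,(1-\mu(E_2))$, together with $[Z:2Z]\le \ell$, gives positivity exactly under \eqref{1_1}. The paper does not reprove this statement; it imports it from \cite[Theorem 2.5]{charamaras_kousek_mountakis_radic2025BBingroups}, and your route appears to be essentially the argument behind that citation.
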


\subsection{A correspondence principle}
\label{section_dynamical_st}

As we explained in the introduction, the difference between 
the proofs of Theorems \ref{main theorem} and \ref{CKMR} lies 
at the level of the correspondence principle. More precisely, 
starting from a set $A\subset G$ and a \Folner\ sequence 
$\Phi$, the correspondence principle associates to $(A,\Phi)$ 
an ergodic system which, in some sense, encodes the 
statistical behavior of $A$ along $\Phi$. The improvement of 
this approach is that allowing for a more flexible averaging 
scheme, where we 
consider the density of a set along two distinct, but 
related, \Folner\ sequences, allows us to obtain a better 
lower bound for the measures of some important sets 
in the ergodic system associated to $A$. 

Again, let $(G,+)$ be an abelian group with $\ell=[G: 2G] < \infty$ and $r=|\ker(D)|<\infty$. By $\Sigma$ we denote the space $\{0,1\}^{G}$, endowed 
with the product topology, which becomes a compact metrizable 
space. 
We also consider the shift action $S=(S_g)_{g\in G}$ defined by
$S_g\colon \Sigma \to \Sigma$, $S_g(x(h))=x(h+g)$, for any 
$h,g\in G$, $x=(x(g))_{g\in G} \in \Sigma$, and note that 
$S$ is an action of $G$ on $X$ by homeomorphisms.
We further denote the $G$-action $g\mapsto S_{2g}$ by $S^2$.

Via the following variant of Furstenberg's correspondence 
principle, (originally introduced in Furstenberg's ergodic 
proof of Szemer\'edi's theorem \cite{Furstenberg77}), we can 
reduce 
Theorem \ref{main theorem} to the dynamical statement of 
Theorem \ref{CKMR}.

\begin{lemma}\label{correspondence}
Let $A\subset G$ and $\Phi$ be any \Folner{} sequence in $G$ 
(and pass to a subsequence if necessary so that all densities 
exist). 
There exist an ergodic $G$-system $(\Sigma 
\times \Sigma, \mu, S^2 \times S)$, an open set $E\subset \Sigma$, a point $a\in \Sigma$ and a \Folner{} sequence 
$\Phi'$, such that $(a,a)\in \gen(\mu,\Phi')$, $A=\{g\in G\colon S_g a \in E\}$ and 
\begin{equation}\label{dbound_no_shift}
    \ell\mu(\Sigma\times E) + \mu(E\times\Sigma) 
    \geq \ell \cdot (\diff_{\Phi/2}(A) + \diff_{\Phi}(A) ) - \ell +1.
\end{equation}
\end{lemma}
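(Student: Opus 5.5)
We would use the standard shift-space correspondence, with one twist: the orbit of the diagonal point is averaged along $\Phi/2$. The first coordinate, driven by $S^2$, then samples $A$ on $2(\Phi_N/2)=\Phi_N\cap 2G$. The second coordinate, driven by $S$, samples $A$ on $\Phi_N/2$.

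Concretely, let $a=1_A\in\Sigma$ and $E=\{x\in\Sigma\colon x(0)=1\}$. The set $E$ is clopen and $A=\{g\colon S_ga\in E\}$.

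\textbf{Step 1.} We first check that $\Phi/2=(\Phi_N/2)_N$ is a \Folner\ sequence in $G$. For $h\in G$ we have $(\Phi_N/2)\triangle(h+\Phi_N/2)=D^{-1}(\Phi_N\triangle(2h+\Phi_N))$, so its size is at most $r|\Phi_N\triangle(2h+\Phi_N)|$. By Lemma \ref{lemma aux folner 2}, $|\Phi_N/2|\approx \frac{r}{\ell}|\Phi_N|$, so this is $\oh(|\Phi_N/2|)$.

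\textbf{Step 2.} Define measures
$$\mu_N=\frac{1}{|\Phi_N/2|}\sum_{g\in\Phi_N/2}\delta_{(S_{2g}a,\,S_ga)}$$
on $\Sigma\times\Sigma$. Pass to a subsequence $(N_k)$ along which they converge weak$^*$ to some $\nu$. Since $\Phi/2$ is \Folner, $\nu$ is $S^2\times S$-invariant, and $(a,a)\in\gen(\nu,(\Phi_{N_k}/2)_k)$. Because $E$ is clopen, the measures of the relevant sets pass to the limit exactly:
$$\nu(\Sigma\times E)=\lim_k\frac{|A\cap\Phi_{N_k}/2|}{|\Phi_{N_k}/2|}=\diff_{\Phi/2}(A).$$
For the other coordinate, each $x\in\Phi_N\cap 2G$ has exactly $r$ preimages in $\Phi_N/2$, so Lemma \ref{lemma aux folner 2} gives
$$\nu(E\times\Sigma)=\lim_k\frac{r\,|A\cap 2G\cap\Phi_{N_k}|}{|\Phi_{N_k}/2|}=\ell\,\diff_{\Phi}(A\cap 2G).$$
We also have $\diff_\Phi(A\cap 2G)\ge \diff_\Phi(A)-\diff_\Phi(G\setminus 2G)=\diff_\Phi(A)-\frac{\ell-1}{\ell}$, using the invariance of the density under the cosets $g_i$. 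Combining,
$$\ell\nu(\Sigma\times E)+\nu(E\times\Sigma)\ge \ell\,\diff_{\Phi/2}(A)+\ell\,\diff_\Phi(A)-\ell+1,$$
which is \eqref{dbound_no_shift} for $\nu$.

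\textbf{Step 3 (main obstacle).} The measure $\nu$ need not be ergodic. We take its ergodic decomposition $\nu=\int\nu_\omega\,d\omega$. The quantity $\ell\nu(\Sigma\times E)+\nu(E\times\Sigma)$ is the average of the same expression for the components $\nu_\omega$. Hence the set of $\omega$ for which $\nu_\omega$ satisfies the inequality has positive measure.

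We then invoke the standard fact, used in \cite{Kra_Moreira_Richter_Robertson:2023} and \cite{charamaras_kousek_mountakis_radic2025BBingroups}: if a point is generic for $\nu$ along a \Folner\ sequence, then for $\nu$-almost every ergodic component $\nu_\omega$ there is a \Folner\ sequence $\Phi'$ along which that point is generic for $\nu_\omega$. This fact holds for countable amenable groups, and it is the delicate part of the argument. Choosing $\omega$ in the intersection of the positive-measure set with this full-measure set gives $\mu=\nu_\omega$ and $\Phi'$ satisfying all the requirements of the lemma.
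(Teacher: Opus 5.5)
Your proposal is correct and follows essentially the same route as the paper: the indicator point $a$, the clopen cylinder $E$, the empirical measures of $(S_{2g}a,S_ga)$ over $\Phi_N/2$, the same evaluation of the two marginals using that each coset of $2G$ has $\Phi$-density $1/\ell$, and the same ergodic-decomposition and generic-point argument (the ``standard'' step the paper takes from \cite[Lemma 2.4]{charamaras_kousek_mountakis_radic2025BBingroups}) to pass to an ergodic $\mu$ and a \Folner\ sequence $\Phi'$. Your exact count that each element of $\Phi_N\cap 2G$ has precisely $r$ preimages in $\Phi_N/2$ is cleaner than the paper's detour through the sets $F_N$, which in fact coincide with $\Phi_N/2$ because $D^{-1}(\Phi_N)$ is already a union of $\ker(D)$-cosets; your explicit check that $\Phi/2$ is \Folner\ is also a step the paper leaves implicit.
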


\begin{proof}

By definition, 
$$\diff_{\Phi/2}(A) = \lim_{N\to\infty}\frac{|A\cap\Phi_{N}/2|}{|\Phi_{N}/2|}.$$
We associate to the set $A$ a point $a\in\Sigma=\{0,1\}^G$ via 
$$a(g) = \begin{cases}
    1, & \text{if}~g\in A, \\
    0, & \text{otherwise}.
\end{cases}$$
\ \\
Letting $E=\{x\in\Sigma\colon x(e_G)=1\}$ -- which, we note, 
is a clopen set -- observe that, by 
construction, $A=\{g\in G\colon S_ga\in E\}$. We 
can define the sequence of Borel probability measures $(\mu_N)_{N\in \N}$ on 
$\Sigma\times\Sigma$ given by 
$$\mu_N = \frac{1}{|\Phi_{N}/2|}\sum_{g\in \Phi_{N}/2}\delta_{(S_{2g}\times S_g)(a,a)}.$$
If $\mu'$ is a weak* accumulation point of $(\mu_N)_{N\in \N}$, then $\mu'$ is an $(S^2 \times S)$-invariant measure. Now, since for each $N\in \N$, we have
$$\mu_N(\Sigma\times E) = \frac{1}{|\Phi_{N}/2|}\sum_{g\in \Phi_{N}/2}\delta_{S_ga}(E) = \frac{| A\cap \Phi_N/2|}{|\Phi_{N}/2|},$$
sending $N\to\infty$ yields
\begin{equation}\label{dbound_eq1}
    \mu'(\Sigma\times E) = \diff_{\Phi/2}(A).
\end{equation}
Now, \cite[Lemma A.4]{charamaras_kousek_mountakis_radic2025BBingroups} gives that
$$ \lim_{N\to \infty} \frac{|\{g\in \Phi_N/2 : g+ \ker(D) \subset \Phi_N/2\} |}
    {|\Phi_N/2|}=1.$$
Moreover, if we consider the \Folner\ sequence $N\mapsto F_{N}=\bigcup_{g\in \Phi_{N}/2}
g+\ker(D) \supset \Phi_{N}/2$ (where $(F_N)$ is \Folner\ since $|\ker(D)|=r<\infty)$, it is easy to see that $\sum_{g\in F_{N}} \delta _{S_{2g}a} (E)= r 
\sum_{g\in 2 (\Phi_{N}/2)} \delta _{S_{g}a} (E)$. Therefore, it follows that
\begin{equation*}
    \left| \frac{1}{|\Phi_{N}/2|} \sum_{g\in \Phi_{N}/2} \delta _{S_{2g}a} (E) 
    - \frac{r}{|\Phi_{N}/2|} \sum_{g\in 2 (\Phi_{N}/2)} \delta _{S_{g}a} (E)
    \right| \leq 
    \frac{|F_{N} \setminus \Phi_{N}/2|}{|\Phi_{N}/2|}\xrightarrow[]{N\to \infty} 0.
\end{equation*}
We thus see that
\begin{align}\label{eq:3}
    \mu_N(E\times\Sigma) = \frac{1}{|\Phi_{N}/2|}
    \sum_{g\in\Phi_{N}/2}\delta_{S_{2g}a}(E)
    & = \frac{r}{|\Phi_{N}/2|}\sum_{g\in 2(\Phi_{N}/2)}\delta_{S_ga}(E) + \oh_{N\to\infty}(1) \notag \\
    & \geq \frac{\ell}{|\Phi_{N}|}|A\cap 2(\Phi_{N}/2)|+ \oh_{N\to\infty}(1),
\end{align}
by Lemma \ref{lemma aux folner 2}. Since $2(\Phi_N/2)=\Phi_N \cap 2G$, we have 
$$|A\cap 2(\Phi_{N}/2)|=|A\cap \Phi_N|-|A\cap (\Phi_N\setminus 2(\Phi_N/2))| \geq |A\cap \Phi_N|-|\Phi_N|+| \Phi_N\cap 2G|.$$
Hence, using \cite[Lemma 5.4]{Charamaras_Mountakis_2025}, it 
follows by \eqref{eq:3} as $N\to \infty$ that
\begin{equation}\label{dbound_eq2}
    \mu'(E\times\Sigma) 
    \geq \ell \bigg(\diff_\Phi(A) - 1 + 
    \frac{1}{\ell}\bigg)
    = \ell \diff_\Phi(A) - \ell + 1.
\end{equation}
Combining \eqref{dbound_eq1} and \eqref{dbound_eq2} 
we obtain \eqref{dbound_no_shift} for $\mu'$. 
Although $\mu'$ is not necessarily ergodic, the argument that allows us to find an ergodic measure $\mu$ satisfying \eqref{dbound_no_shift} is standard (see, e.g., the proof of \cite[Lemma 2.4]{charamaras_kousek_mountakis_radic2025BBingroups}). 
\end{proof}

\subsection{The proof of Theorem \ref{main theorem}}

We can now use the correspondence principle of Lemma 
\ref{correspondence} and Lemma \ref{EP and B+B} in order to
deduce \eqref{main_theorem_1_1} of Theorem \ref{main theorem}, which, in view of Proposition \ref{prop equiv formulation} shows the full theorem. 

\begin{proof}[Proof of Theorem \ref{main theorem}]
Let $G, \ell, r, \Phi$ be as in the assumptions of Theorem 
\ref{main theorem}. Let also $A\subset G$ satisfy  
$\diff_\Phi(A) + \diff_{\Phi/2}(A) > \frac{2\ell -1}{\ell}.$
Using Lemma \ref{correspondence} we can find an ergodic $G$-system $(\Sigma 
\times \Sigma, \mu, S^2 \times S)$, an open set $E\subset \Sigma$, a point $a\in \Sigma$ and a \Folner{} sequence 
$\Phi'$ in $G$, such that $(a,a)\in \gen(\mu,\Phi')$, $A=\{g\in G\colon S_g a \in E\}$ and 
\begin{equation*}
     \ell\mu(\Sigma\times E) + \mu(E\times\Sigma) 
    \geq \ell \cdot (\diff_{\Phi/2}(A) + \diff_{\Phi}(A) ) - \ell +1.
\end{equation*}
Because of the assumptions on $A$, we get
$$\ell\mu(\Sigma\times E) + \mu(E\times\Sigma) >\ell.$$ 
So, by Theorem \ref{analogue of 2.1 KR} for the system $(\Sigma \times \Sigma, \mu, S^2 \times S)$, the open sets $\Sigma \times E, E \times \Sigma$ and the generic point $(a,a)\in 
\gen(\mu, \Phi ')$ we find an \Erdos{} progression $((a,a), (x_1,x_2), (y_1,y_2)) \in \Sigma ^6 $ with 
$$(x_1, x_2, y_1, y_2)\in E \times \Sigma \times \Sigma \times E.$$
Applying Lemma \ref{EP and B+B} for the open sets $E\times \Sigma$ and 
$\Sigma \times E$, we obtain an infinite sequence $B=(b_n)_{n\in \N}$
so that 
\begin{equation}\label{eq_useful_2B}
    B \subset \{g\in G: S_{2g}\times S_{g} (a,a) \in E\times \Sigma\}
    =\{ g\in G: S_{2g} a \in E \}
\end{equation}
and 
\begin{equation}\label{eq_useful_B+B}
    B \oplus B \subset \{g\in G: S_{2g}\times S_{g} (a,a) \in \Sigma \times E\}
    =\{g\in G: S_{g} a \in E\}=A.
\end{equation}
We write $2B=\{2b: b\in B\}$.
From \eqref{eq_useful_2B} it follows that 
$2B\subset A$, and combining this with \eqref{eq_useful_B+B}, noting that $B+B=
(B \oplus B) \cup 2B$, we conclude that $B+B\subset A$.
\end{proof}

\section{A slightly stronger result}

With not much more effort, we can actually prove something 
slightly more general than Theorem \ref{main theorem}. 

\begin{theorem}\label{main theorem strong}
Let $(G,+)$ be a countable abelian group with $\ell=[G:2G]<\infty$ and $r=|\ker(D)|<\infty$. Let $A,A'\subset G$ and $\Phi$ be any \Folner{} sequence in $G$. Then the following hold:
\begin{enumerate}
\item  \label{main_theorem_1_2_strong}
If 
$\diff_\Phi(A')+ \diff_{\Phi/2}(A)> 1,$
there exist an infinite set $B\subset G$ and some $t\in G$ such that 
$t+B\oplus B\subset A$ and $t+2B\subset A'$. 
\item  \label{main_theorem_1_1_strong} 
\vspace{2mm} 
If 
$\diff_\Phi(A') + \diff_{\Phi/2}(A)> \frac{2\ell-1}{\ell},$
there exists an infinite set $B\subset G$ such that $B\oplus B\subset A$ and $2B\subset A'$. 
\item   \label{main_theorem_evens_strong}  \vspace{2mm} If 
$ \diff_\Phi(A'\cap 2G)+\diff_{\Phi/2}(A\cap 2G) > \frac{1}{\ell},$
there exists an infinite set $B\subset G$ such that $B\oplus B\subset A$ and $2B\subset A'$.
\end{enumerate}    
\end{theorem}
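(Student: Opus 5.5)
We follow the proof of Theorem \ref{main theorem} almost verbatim, but run the correspondence principle with two different sets. First we establish the analogue of Proposition \ref{prop equiv formulation} for pairs $(A,A')$. This lets us prove only part \eqref{main_theorem_1_1_strong} and deduce the others.

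For the equivalence, we repeat the coset arguments with $A$ and $A'$ handled in parallel. For \eqref{main_theorem_1_2_strong}$\Leftarrow$\eqref{main_theorem_evens_strong}, subadditivity gives a single index $i$ with
$$\diff_\Phi(A'\cap(2G+g_i))+\diff_{\Phi/2}(A\cap(2G+g_i))>1/\ell.$$
Translating by $-g_i$, both sets become subsets of $2G$, and \eqref{main_theorem_evens_strong} applies with $t=g_i$. For the direction from \eqref{main_theorem_1_2_strong}, we spread $A\cap 2G$ and $A'\cap 2G$ over all cosets. We then need versions of Lemmas \ref{lemma reductions no shift} and \ref{lemma reductions with shift} for the split conditions $t+B\oplus B\subset A$, $t+2B\subset A'$. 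Their proofs are pigeonhole arguments on the cosets of $2G$ containing the $b_n$: pass to a subsequence of $B$ lying in one coset $2G+h$. Then $B\oplus B$ and $2B$ both lie in $4G+2h\subset 2G$, or after a shift in a single coset, so these reductions carry over.

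Next comes a two-set correspondence principle, the analogue of Lemma \ref{correspondence}. Encode both sets at once by letting $a\in\Sigma$ be the indicator of $A$ and $a'\in\Sigma$ the indicator of $A'$. Consider the orbit of $(a',a)$ under $S^2\times S$, averaged over $\Phi_N/2$:
$$\mu_N=\frac{1}{|\Phi_N/2|}\sum_{g\in\Phi_N/2}\delta_{(S_{2g}a',\,S_g a)}.$$
Exactly as before, $\mu(\Sigma\times E)=\diff_{\Phi/2}(A)$. The computation \eqref{eq:3}, run with $a'$ in place of $a$, gives
$$\mu(E\times\Sigma)\ge \ell\,\diff_\Phi(A')-\ell+1.$$
The only ingredients there were \cite[Lemma A.4]{charamaras_kousek_mountakis_radic2025BBingroups} and Lemma \ref{lemma aux folner 2}, which are unaffected. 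Ergodic decomposition is handled as in the cited standard argument, giving an ergodic $\mu$ with $(a',a)$ generic along some $\Phi'$ and
$$\ell\mu(\Sigma\times E)+\mu(E\times\Sigma)\ge \ell\bigl(\diff_{\Phi/2}(A)+\diff_\Phi(A')\bigr)-\ell+1>\ell.$$

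Now apply Theorem \ref{analogue of 2.1 KR} with the open sets $E\times\Sigma$ and $\Sigma\times E$ to obtain an Erd\H{o}s progression starting at $(a',a)$. Lemma \ref{EP and B+B} then yields an infinite $B$ with $2B\subset\{g: S_{2g}a'\in E\}$, that is $2B\subset A'$, and $B\oplus B\subset\{g:S_ga\in E\}=A$. This is \eqref{main_theorem_1_1_strong}.

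The main obstacle is the genericity and ergodic-decomposition step. We must check that the standard argument still produces a point $(a',a)$ generic for an ergodic component that keeps the weighted bound. It should, since the argument never used $a=a'$. The other point needing care is verifying the split-set versions of the reduction lemmas.
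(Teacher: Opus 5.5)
Your proposal is correct and follows the paper's route: the two-point correspondence of Lemma \ref{correspondence_2}, averaging $\delta_{(S_{2g}a',\,S_g a)}$ over $\Phi_N/2$, then Theorem \ref{analogue of 2.1 KR} with the open sets $E\times\Sigma$, $\Sigma\times E$, then Lemma \ref{EP and B+B}, with the equivalences handled coset by coset as in Proposition \ref{prop equiv formulation} (the paper omits these). One harmless slip: if all $b_n$ lie in $2G+h$, then $2B\subset 4G+2h$ but only $B\oplus B\subset 2G+2h=2G$, not $B\oplus B\subset 4G+2h$; since your reductions only use containment in $2G$, nothing breaks.
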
   

The proof of the equivalences between the statements in 
Theorem \ref{main theorem strong} is similar to the proof of Proposition \ref{prop equiv formulation} and so we omit it. In 
order to actually prove Theorem \ref{main theorem strong} we 
use the same dynamical result that we used for Theorem 
\ref{main theorem}, that is Theorem \ref{analogue of 2.1 KR}, 
and a more general form of the correspondence principle to 
capture the statistics of both sets. A similar form of the 
correspondence was used in \cite[Lemmas 2.8, 2.9]{kousek2025asymmetric}.

\begin{lemma}\label{correspondence_2}
Let $A,A'\subset G$ and $\Phi$ be any \Folner{} sequence in 
$G$ (and pass to a subsequence if necessary so that all 
densities exist). There exist an ergodic $G$-system $(\Sigma 
\times \Sigma, \mu, S^2 \times S)$, an open set $E\subset \Sigma$, and points $a,a'\in \Sigma$ and a \Folner{} sequence 
$\Phi'$, such that $(a,a')\in \gen(\mu,\Phi')$, $A=\{g\in G\colon S_g a \in E\}$ and $A'=\{g\in G\colon S_{g} a' \in E\}$ with
\begin{equation}\label{dbound_no_shift}
    \ell\mu(\Sigma\times E) + \mu(E\times\Sigma) 
    \geq \ell \cdot (\diff_{\Phi/2}(A) + \diff_{\Phi}(A') ) - \ell +1.
\end{equation}
\end{lemma}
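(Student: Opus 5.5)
We follow the proof of Lemma \ref{correspondence}, replacing the diagonal point $(a,a)$ by the pair $(a,a')$. Let $a,a'\in\Sigma$ be the indicator functions of $A$ and $A'$, and let $E=\{x\in\Sigma\colon x(e_G)=1\}$, so that $A=\{g\colon S_ga\in E\}$ and $A'=\{g\colon S_ga'\in E\}$. Define
$$\mu_N=\frac{1}{|\Phi_N/2|}\sum_{g\in\Phi_N/2}\delta_{(S_{2g}\times S_g)(a',a)}.$$
Note that we average over $\Phi_N/2$ and apply the doubled shift to $a'$. (In the statement the generic point is written $(a,a')$; with the convention $S^2\times S$ it must be $(a',a)$, or equivalently the roles of $E\times\Sigma$ and $\Sigma\times E$ are interchanged. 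We use the ordering that matches the bound.) Passing to a subsequence, let $\mu'$ be a weak$^*$ limit of $(\mu_N)$. It is $(S^2\times S)$-invariant, because $(\Phi_N/2)$ is a \Folner{} sequence.

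Next we compute the two marginals. First,
$$\mu_N(\Sigma\times E)=\frac{|A\cap\Phi_N/2|}{|\Phi_N/2|},$$
so $\mu'(\Sigma\times E)=\diff_{\Phi/2}(A)$; this is exact because $E$ is clopen. Second, $\mu_N(E\times\Sigma)$ counts the $g\in\Phi_N/2$ with $2g\in A'$. We reuse verbatim the argument of Lemma \ref{correspondence}. By \cite[Lemma A.4]{charamaras_kousek_mountakis_radic2025BBingroups}, almost all $g\in\Phi_N/2$ have their whole coset $g+\ker(D)$ inside $\Phi_N/2$. Hence each element of $2(\Phi_N/2)=\Phi_N\cap 2G$ is counted $r$ times up to $\oh(1)$. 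Lemma \ref{lemma aux folner 2} then gives
$$\mu_N(E\times\Sigma)\geq\frac{\ell}{|\Phi_N|}\,|A'\cap\Phi_N\cap 2G|+\oh(1).$$
Bounding $|A'\cap\Phi_N\cap2G|\ge|A'\cap\Phi_N|-|\Phi_N|+|\Phi_N\cap2G|$ and using $|\Phi_N\cap 2G|/|\Phi_N|\to 1/\ell$, we obtain $\mu'(E\times\Sigma)\ge \ell\diff_\Phi(A')-\ell+1$. Multiplying the first marginal by $\ell$ and adding yields \eqref{dbound_no_shift} for $\mu'$.

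The main obstacle is passing from $\mu'$ to an ergodic $\mu$ while keeping a generic point. We argue as in \cite[Lemma 2.4]{charamaras_kousek_mountakis_radic2025BBingroups}. Take the ergodic decomposition $\mu'=\int\mu_x\,d\mu'(x)$. The quantity $\ell\mu_x(\Sigma\times E)+\mu_x(E\times\Sigma)$ integrates to at least the right-hand side, so some ergodic component $\mu$ satisfies the same inequality. By the standard lemma that $\mu'$-almost every ergodic component admits the orbit point as a generic point along a suitable \Folner{} sequence, we get $\Phi'$ with $(a',a)\in\gen(\mu,\Phi')$; that lemma uses the fact that $(a',a)$ is quasi-generic for $\mu'$ along $\Phi/2$. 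This step needs no new ideas, since the argument never used $a=a'$.
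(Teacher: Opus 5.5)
Your proposal is correct and is essentially the paper's own proof. The paper likewise takes $a,a'$ to be the indicators of $A,A'$, uses the measures $\mu_N=\frac{1}{|\Phi_N/2|}\sum_{g\in\Phi_N/2}\delta_{(S_{2g}\times S_g)(a',a)}$, and says the marginal estimates and the ergodic-decomposition step of Lemma \ref{correspondence} go through unchanged. You are also right that the generic point should be $(a',a)$ rather than $(a,a')$: that is the ordering the paper actually uses in $\mu_N$, and the one needed for the application in Theorem \ref{main theorem strong}.
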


\begin{proof}
The only difference with the proof of Lemma \ref{correspondence} is that we now introduce two points $a,a'\in \Sigma$ which correspond to the indicators of $A$ and $A'$, respectively, and then consider the measures 
$(\mu_N)_{N\in \N}$ on 
$\Sigma\times\Sigma$ given by 
$$\mu_N = \frac{1}{|\Phi_{N}/2|}\sum_{g\in \Phi_{N}/2}\delta_{(S_{2g}\times S_g)(a',a)}.$$
The rest of the proof proceeds analogously. 
\end{proof}

The question of whether Theorem \ref{main theorem strong} is 
optimal is easily seen to be a trivial one, which makes it 
less interesting than the more natural results, Theorem \ref{CKMR} and Theorem 
\ref{main theorem}. Indeed, taking any \Folner\ sequence $\Phi$ and any set $A\subset G$ with full 
density along $\Phi$ and $A'=\emptyset$, we see at once that 
the threshold values in \eqref{main_theorem_1_2_strong} and \eqref{main_theorem_evens_strong} of 
the theorem are obtained, but the 
conclusion fails. 

\section{Optimality and relations between main theorems}\label{optimality sec}

We next show that Theorem \ref{CKMR} does not imply Theorem 
\ref{main theorem}; that is, the density assumption of Theorem
\ref{CKMR} cannot be used to deduce the density assumption of 
Theorem \ref{main theorem}. This is already the case in the 
integers, which was rather unexpected. However, the 
construction seems to be adaptable in all groups satisfying 
the standing assumptions, although we do not pursue this in 
detail. This was mentioned as Proposition \ref{no reverse implication of main theorems intro} and we repeat the statement here for convenience.    

\begin{proposition}\label{no reverse implication of main theorems}
There is a set 
$A\subset \Z$ such that 
$$\overline{\diff}_{\Phi}(A) \leq 1-\frac{\ell \alpha_{\Phi}}{\ell+r},$$
for every \Folner\ sequence $\Phi$ in $\Z$, but 
$$\diff_{F}(A)+\diff_{F/2}(A)>1, $$
for some \Folner\ sequence $F$ in $\Z$.
\end{proposition}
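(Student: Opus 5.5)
The plan is to take $A$ periodic of density strictly between $1/2$ and $2/3$; then both conditions reduce to elementary density computations. In $\Z$ the doubling map is injective and $2\Z$ has index two, so $\ell=2$ and $r=1$. The first condition of the proposition therefore reads
$$\overline{\diff}_{\Phi}(A)\leq 1-\tfrac{2}{3}\alpha_\Phi \quad\text{for every \Folner\ sequence } \Phi \text{ in } \Z.$$
The key observation is that $\alpha_\Phi$ is uniformly bounded above. Since $|\Phi_N/2\cap\Phi_N|\leq|\Phi_N/2|$, Lemma \ref{lemma aux folner 2} gives
$$\alpha_\Phi\leq\lim_{N\to\infty}\frac{|\Phi_N/2|}{|\Phi_N|}=\frac{r}{\ell}=\frac12.$$
Hence $1-\frac23\alpha_\Phi\geq \frac23$ for every \Folner\ sequence $\Phi$. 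So it suffices to find a set $A$ whose upper density along every \Folner\ sequence is at most $2/3$, but for which $\diff_F(A)+\diff_{F/2}(A)>1$ along some \Folner\ sequence $F$.

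I would take $A=\{n\in\Z\colon n\equiv 0,1 \pmod 3\}$, which is periodic of density $2/3$.

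\textbf{First condition.} For any \Folner\ sequence $\Phi$ in $\Z$ and each residue class $c$ modulo $3$, \Folner\ invariance under the shift by $1$ gives
$$\frac{|\Phi_N\cap(3\Z+c)|}{|\Phi_N|}\to\frac13.$$
Indeed, translating by $1$ permutes the three classes cyclically and changes each count by at most $|\Phi_N\triangle(\Phi_N+1)|=\oh(|\Phi_N|)$. Therefore $\diff_\Phi(A)=2/3\leq 1-\frac23\alpha_\Phi$, as required.

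\textbf{Second condition.} Take $F_N=\{1,\dots,N\}$, so that $F_N/2=\{1,\dots,\lfloor N/2\rfloor\}$. Both are sequences of intervals of length tending to infinity, so both densities of $A$ equal $2/3$. Thus $\diff_F(A)+\diff_{F/2}(A)=4/3>1$.

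The only step needing any care is the bound $\alpha_\Phi\leq 1/2$. It follows from Lemma \ref{lemma aux folner 2}, or directly from the fact that $\Phi_N/2$ consists of the integers $g$ with $2g\in\Phi_N$, which number $|\Phi_N\cap 2\Z|\sim|\Phi_N|/2$. After that, everything is a routine equidistribution computation.

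If one wanted an example less degenerate than a periodic set, the same scheme applies. Any $A$ with upper Banach density at most $2/3$ satisfies the first inequality automatically. One then only needs a single \Folner\ sequence, for instance the intervals $[1,N]$, along which the densities of $A$ on $F_N$ and on $F_N/2$ sum to more than $1$.
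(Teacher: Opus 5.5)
Your argument is correct, and it is a genuinely different and much simpler route than the paper's.

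\textbf{The paper's route.} The paper concentrates $A$ on the sparse, non-q.i.d.\ blocks $F_N=[4^{2N},2\cdot 4^{2N})$. It takes $A=\bigcup_N F_N\cup\bigcup_N (F_N/2\cap k\N)$ with $k\geq 3$, so that $\diff_F(A)+\diff_{F/2}(A)=1+\frac1k$. This $A$ has upper Banach density $1$, so no global bound is available. The work therefore lies in checking $3|A\cap\Phi_N|+2|\Phi_N\cap\Phi_N/2|\leq 3|\Phi_N|+\oh(|\Phi_N|)$ for an arbitrary \Folner\ sequence $\Phi$. The paper does this by splitting $\Phi_N$ into pieces inside the blocks $F_n$ and $F_n/2$, using that $F_n,F_n/2,F_n/4$ are pairwise disjoint.

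\textbf{Your route.} You bypass all of this with the universal bound $\alpha_\Phi\leq r/\ell=\frac12$ from Lemma~\ref{lemma aux folner 2}. Hence the threshold in part (1) of Theorem~\ref{CKMR} is never below $\frac23$ in $\Z$. Any set of upper Banach density at most $\frac23$ whose density along $\{1,\dots,N\}$ exceeds $\frac12$ then works, since that density is automatically the same along $\{1,\dots,\lfloor N/2\rfloor\}$. No case analysis is needed. Your example also shows that Theorem~\ref{main theorem} already improves on Theorem~\ref{CKMR} along the standard intervals: for sets with a natural density, the threshold drops from $\frac23$ to $\frac12$.

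\textbf{What each approach buys.} The paper's construction exhibits a qualitatively different phenomenon. Its set is full along a non-q.i.d.\ sequence, where Theorem~\ref{CKMR} says nothing, and has upper Banach density $1$. Yet it fails the hypothesis of Theorem~\ref{CKMR} along every \Folner\ sequence. So there the non-implication is not merely an artifact of the ceiling on $\alpha_\Phi$. Your mechanism, by contrast, needs $\frac{\ell}{\ell+r}>\frac12$, i.e.\ $\ell>r$. It works equally well in, e.g., $\Z^d\times T$ with $T$ finite and $d\geq 1$, where $\ell=2^d r$. It does not work in groups on which doubling is an automorphism, whereas the paper suggests its block construction adapts to every group satisfying the standing assumptions.
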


\begin{proof}
We let $F=(F_N)_{N\in \N}$ be a \Folner\ sequence that is 
not q.i.d. and such that $F_1,F_1/2,F_1/4,F_2,F_2/2,F_2/4,\ldots$ are pairwise disjoint. For example, let $F_N=[4^{2N},2\cdot 4^{2N})\cap \N$, for 
$N\in \N$. In particular, $F_N/2=[2\cdot 4^{2N-1},4^{2N})\cap \N$ and $F_N/4=[4^{2N-1},2\cdot 4^{2N-1})\cap \N$. We 
then consider a set $A\subset \N$ defined via 
$$A= \left(\bigcup_{N=1}^{\infty} F_N\right) \bigcup \left( \bigcup_{N=1}^{\infty} F_N/2 \cap k\N \right),$$
with $k\in \N$, $k\geq 3$. Essentially, this could 
be generalized by replacing $k\N$ with a syndetic set of 
sufficiently small density. 

First, we observe that 
$$\diff_{F}(A)+\diff_{F/2}(A)=1+\frac{1}{k}>1.$$ 
We remark that, in view of Theorem \ref{main theorem}, this 
implies the existence of some infinite set $B\subset \N$ and 
$t\in \N_0$ such that $t+B+B \subset A$; this is, however, 
not a necessary part of this proof. 

We proceed to prove that for every \Folner\ sequence $\Phi$ 
in 
$\Z$, it holds that 
\begin{equation}\label{eq:4} 
\overline{\diff}_{\Phi}(A) \leq 1-\frac{\ell \alpha_{\Phi}}{\ell+r}.    
\end{equation}
By the definition of $A$, we only have to consider \Folner\ 
sequences that only intersect non-trivially with $F$ and 
$F/2$. For, if $\Phi_N= G_N \cup B_N$, where $G_N \subset \bigcup_{N=1}^{\infty} F_{N}\cup F_{N}/2$ and $B_N\cap \left(\bigcup_{N=1}^{\infty} F_{N}\cup F_{N}/2\right)=\emptyset$, it holds that
$$\overline{\diff}_{\Phi}(A)=\limsup_{N\to \infty} \frac{|A\cap \Phi_N|}{|\Phi_N|}= \limsup_{N\to \infty} \frac{|A\cap G_N|}{|G_N|+|B_N|} \leq \limsup_{N\to \infty} \frac{|A\cap G_N|}{|G_N|}.$$
% we also need to check that if 
% $$\limsup_{N\to \infty} \frac{|A\cap G_N|}{|G_N|}\leq 1-\frac{2\alpha_G}{3}, then $$\limsup_{N\to \infty} \frac{|A\cap \Phi_N|}{|\Phi_N|} \leq  1-\frac{2\alpha_{\Phi}}{3}$$, but this works by the above and writing 
%$$\limsup_{N\to \infty} \frac{|A\cap G_N|}{|G_N|+|B_N|}=
%\limsup_{N\to \infty} \frac{|A\cap G_N|}{|G_N|} \cdot \frac{|G_N|}{|G_N|+|B_N|}$$
%and then also comparing the $\alpha$s by opening up the definition. 
To easy the presentation, we first address two simple cases.
If $\Phi=F$ or $\Phi=F/2$, we have that $\alpha_{F}=0$, and 
so \eqref{eq:4} holds by default. Next, we consider the case that $(\Phi_N)_{N\in \N}=(F_N\cup F_N/2)_{N\in \N}$. Then we have 
that 
$$\alpha_{\Phi}= \lim_{N\to \infty} 
\frac{|F_N/2|}{|F_N|+|F_N/2|}=\frac{1}{3}.$$
In the integers we have that $\ell=2$ and $r=1$, so this means 
that the right hand side on \eqref{eq:4} equals $7/9$. At the 
same time 
$$\overline{\diff}_{\Phi}(A)=\lim_{N\to \infty} \frac{|F_N|+|F_N/2|/k}{|F_N|+|F_N/2|} = \frac{2+1/k}{3}=\frac{2k+1}{3k} \leq \frac{7}{9}, $$
for $k\geq 3$.
Similar calculations show \eqref{eq:4} in the case that 
$\Phi_N=(G_N \cup H_N)_{N\in \N}$, where $G_N\subset F_N$ and $H_N\subset F_N/2$.

Finally, we consider the general case where $\Phi_N=(G_N \cup H_N)_{N\in \N}$, with $G_N=G_{N_{k_1}}\cup \cdots \cup G_{N_{k_N}}$ for $G_{N_{k_i}}\subset F_{n(N,k_i)}$, and $H_N=H_{N_{m_1}}\cup \cdots \cup H_{N_{m_N}}$ for 
$H_{N_{m_i}} \subset F_{m(N,m_i)}/2$. Note that, $(G_N)_{N\in \N}$ and $(H_N)_{N\in \N}$ have to be \Folner\ sequences also. 
In order to verify \eqref{eq:4} it 
suffices to show that for every $N\in \N$, 
$$\left| A\cap \Phi_N \right| + \frac{2}{3} \left| \Phi_N\cap \Phi_N/2 \right| \leq \left| \Phi_N \right| + \oh_{N\to\infty}(1),$$
which is implied by
\begin{equation}\label{eq:5}
3\left| A\cap \Phi_N \right| + 2\left| \Phi_N\cap \Phi_N/2 \right| \leq 3 \left| \Phi_N \right| + \oh_{N\to\infty}(1).  
\end{equation}
Now, using the definition of $\Phi_N$ and $A$, we see that the 
left hand side in \eqref{eq:5} equals 
$$ 3|G_N|+\frac{3}{k}|H_N|+2 |G_N/2\cap H_N| + \oh_{N\to\infty}(1),$$
which equals
$$3\sum_{i=1}^{k_N}\left|G_{N_{k_i}}\right| + \frac{3}{k}\sum_{j=1}^{m_N} \left| H_{N_{m_j}} \right| + 
2 \sum_{i,j} \left| G_{N_{k_i}}/2 \cap H_{N_{m_j}}  \right| + \oh_{N\to\infty}(1). $$
Note that we have here used the fact that $(H_N)$ is \Folner\ and the assumption that 
$\{F_N,F_N/2,F_N/4:N\in \N\}$ are all disjoint, because otherwise 
we could have other non-trivial intersections in 
$$\Phi_N\cap \Phi_N/2=(G_N\cup H_N)\cap (G_N/2\cup H_N/2).$$ 
But observe that $G_{N_{k_i}}/2 \cap H_{N_{m_j}} \neq \emptyset$ only if $F_{n(N,k_i)}/2 \cap F_{m(N,m_i)}/2 \neq \emptyset$, i.e. only if $n(N,k_i)=m(N,m_i)$. Note also that in this case, we have $|G_{N_{k_i}}/2 \cap H_{N_{m_j}}|\leq |H_{N_{m_j}}|$. 

For $k\geq 3$ it thus follows that
\begin{align*}
& 3\left| A\cap \Phi_N \right| + 2\left| \Phi_N\cap \Phi_N/2 \right| \leq 3 \sum_{i=1}^{k_N}\left|G_{N_{k_i}}\right| + 3 \sum_{j=1}^{m_N}\left|H_{N_{k_j}}\right| + \oh_{N\to\infty}(1) \\ 
=& 3|G_N|+3|H_N|+\oh_{N\to\infty}(1)=3|\Phi_N|+\oh_{N\to\infty}(1),    
\end{align*}
showing \eqref{eq:5} and therefore the result. 
\end{proof}

Now we prove that Theorem \ref{main theorem} is optimal in 
the sense of Question \ref{open_qu_1}. That is, for 
any countable abelian group, we can find a \Folner\ sequence 
along which some set achieves the threshold density of 
\eqref{main_theorem_1_2} Theorem \ref{main theorem}, without 
containing the sumsets of the statement. This is Theorem \ref{optimality 1 intro} from the introduction.

\begin{theorem}\label{optimality 1}
Let $(G,+)$ be a countable abelian group with 
$\ell=[G:2G]<\infty$. There is a set 
$A\subset G$ such that 
$$\diff_{\Phi}(A)+\diff_{\Phi/2}(A)=1, $$
for some \Folner\ sequence $\Phi$ in $G$, but there does not 
exist an infinite set $B\subset G$ and $t\in G$ with 
$t+B+B\subset A$.    
\end{theorem}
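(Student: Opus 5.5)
The plan is to build a \Folner\ sequence $\Phi$ for which the sets $\Phi_N$ are pairwise far apart, and for which $\Phi_N/2$ meets no $\Phi_M$ at all. I then take $A$ to be a full-density subset of $\bigcup_N \Phi_N$ coming from \cite[Proposition 5.4]{charamaras_kousek_mountakis_radic2025BBingroups}. This gives $\diff_\Phi(A)=1$ and $\diff_{\Phi/2}(A)=0$, so the sum is exactly $1$. The absence of the sumset $t+B+B$ is inherited from the cited proposition. This is the same mechanism as in the proof of Proposition \ref{no reverse implication of main theorems}, but without the extra sparse piece on $F_N/2$.

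\emph{Construction of $\Phi$.} I assume $G$ is infinite; for finite $G$ the only \Folner\ sequence is essentially constant equal to $G$, and that case has to be checked separately or excluded. Start from any \Folner\ sequence $(\Psi_N)$ in $G$ and translate: $\Phi_N=h_N+\Psi_N$, choosing the $h_N$ inductively. At stage $N$ the sets $\Phi_1,\dots,\Phi_{N-1}$ are already fixed. I choose $h_N$ so that $\Phi_N$ avoids the finite set
\[
\bigcup_{M<N}\bigl(\Phi_M \cup \Phi_M/2 \cup 2\Phi_M\bigr).
\]
I also require that $\Phi_N/2$ avoids $\bigcup_{M<N}\Phi_M$. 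This last condition says $2x\notin\Phi_N$ for $x$ in a finite set, i.e.\ $\Phi_N$ must avoid the finite set $2\bigl(\bigcup_{M<N}\Phi_M\bigr)$.

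Each condition excludes only finitely many values of $h_N$, since for finite sets $K,L$ the set $\{h:(h+K)\cap L\neq\emptyset\}$ is finite. As $G$ is infinite, a suitable $h_N$ exists. Translates of a \Folner\ sequence are still \Folner. By construction, $\Phi_N\cap\Phi_M=\emptyset$ for $N\ne M$, and $\Phi_N/2\cap\Phi_M=\emptyset$ for all $N,M$. The case $N=M$ needs one more condition: I also demand $\Phi_N\cap\Phi_N/2=\emptyset$. I will try to secure this by first passing to a subsequence of $(\Psi_N)$ that is not q.i.d. and using its non-q.i.d.\ property, or else by choosing $h_N$ so that $h_N\notin 2G+(\text{a bounded set})$ relative to $\Psi_N$.

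I expect this diagonal condition $\Phi_N\cap\Phi_N/2=\emptyset$ to be the main obstacle in general groups. In any case only negligibility is needed: it suffices that $|\Phi_N\cap\Phi_N/2|=\oh(|\Phi_N/2|)$. By Lemma \ref{lemma aux folner 2}, $|\Phi_N/2|$ is comparable to $|\Phi_N|$, so I will aim for $\alpha_\Phi=0$, which already suffices.

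\emph{The set $A$.} Since $\alpha_\Phi=0$, $\Phi$ is not q.i.d., and \cite[Proposition 5.4]{charamaras_kousek_mountakis_radic2025BBingroups} provides a set $A'$ with $\diff_\Phi(A')=1$ containing no $t+B+B$ with $B$ infinite. Put
\[
A=A'\cap\bigcup_N\Phi_N \setminus \bigcup_N \Phi_N/2 .
\]
As a subset of $A'$, $A$ still contains no $t+B+B$. Removing $\bigcup_N\Phi_N/2$ costs only $|\Phi_N\cap\Phi_N/2|+\sum_{M}|\Phi_N\cap\Phi_M/2|=\oh(|\Phi_N|)$ elements from $\Phi_N$, by the disjointness arranged above; hence $\diff_\Phi(A)=1$. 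Since $A\cap\Phi_M/2=\emptyset$ for every $M$, we get $\diff_{\Phi/2}(A)=0$. Both densities exist along the full sequence, so $\diff_\Phi(A)+\diff_{\Phi/2}(A)=1$, as required.
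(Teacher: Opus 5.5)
There is one genuine gap, at the step you yourself flag: you never establish $\Phi_N\cap\Phi_N/2=\emptyset$, or even $|\Phi_N\cap\Phi_N/2|=\oh(|\Phi_N|)$. Neither suggested remedy works as stated.

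\begin{itemize}
\item An arbitrary \Folner\ sequence need not have a non-q.i.d.\ subsequence: $[1,N]$ in $\Z$ has doubling ratio $1/2$ along every subsequence.
\item Even starting from a non-q.i.d.\ $(\Psi_N)$, the $N$-dependent translations can destroy this property. For example, $\Psi_N=[N^2,N^2+N]$ is not q.i.d., but $-N^2+\Psi_N=[0,N]$ is.
\item The condition ``$h_N\notin 2G+(\text{a bounded set})$'' has no content in general, e.g.\ when $\ell=1$ and $2G=G$.
\end{itemize}

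The missing observation is that the diagonal condition is just one more finite exclusion. Write $x=h_N+p$ with $p\in\Psi_N$. Then $x\in\Phi_N/2$ iff $2h_N+2p\in h_N+\Psi_N$, i.e.\ iff $h_N\in\Psi_N-2p$. Hence $\Phi_N\cap\Phi_N/2=\emptyset$ whenever $h_N\notin\Psi_N-2\Psi_N$, a set of at most $|\Psi_N|^2$ elements. Add this to your list of excluded translates. All the excluded sets are finite because $|\ker(D)|=r<\infty$ makes each $\Phi_M/2$ finite; this is implicit in the statement, since otherwise $\diff_{\Phi/2}$ is undefined. You then get $\Phi_N\cap\Phi_M/2=\emptyset$ for all $N,M$, so $\alpha_\Phi=0$ along every subsequence.

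Two smaller points. First, \cite[Proposition 5.4]{charamaras_kousek_mountakis_radic2025BBingroups} gives $\overline{\diff}_\Phi(A')=1$, i.e.\ density $1$ along some subsequence $(\Phi_{N_k})$; this is how it is used in the proof of Proposition \ref{optimality in Z}. It does not give $\diff_\Phi(A')=1$ along the whole sequence, so take $(\Phi_{N_k})$ as the \Folner\ sequence in the conclusion. Your trimmed set satisfies $A\cap\Phi_N=A'\cap\Phi_N$ and $A\cap\Phi_M/2=\emptyset$ for all $N,M$. So along $(\Phi_{N_k})$ and $(\Phi_{N_k}/2)$ the densities exist and equal $1$ and $0$. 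Second, your caution about finite $G$ is warranted. There, eventually $\Phi_N=\Phi_N/2=G$ and the sum is $2|A|/|G|$, which is never $1$ when $|G|$ is odd. So the statement tacitly assumes $G$ is infinite.

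With these repairs your argument is complete, and it differs from the paper's. The paper quotes \cite[Corollary 5.6]{charamaras_kousek_mountakis_radic2025BBingroups} for a non-q.i.d.\ $\Phi$ and a set $A$ with $\overline{\diff}_\Phi(A)=1$ and no $t+B+B$. It then asserts, by appeal to the construction inside \cite[Lemma 5.5]{charamaras_kousek_mountakis_radic2025BBingroups}, that one may also arrange $\overline{\diff}_{\Phi/2}(A)=0$, and passes to a subsequence. You instead build the non-q.i.d.\ sequence directly, with the stronger property $\Phi_N\cap\Phi_M/2=\emptyset$ for all $N,M$. You then force $\diff_{\Phi/2}(A)=0$ by the trimming $A=A'\setminus\bigcup_M\Phi_M/2$, which costs nothing on the sets $\Phi_N$ and keeps the absence of sumsets because $A\subset A'$. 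Your route uses Proposition 5.4 purely as a black box and replaces the appeal to the internals of Lemma 5.5 with an elementary, checkable construction. The paper's version is shorter but less self-contained.
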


\begin{proof}
In 
\cite[Corollary 5.6]{charamaras_kousek_mountakis_radic2025BBingroups} it was shown that in every countable abelian 
group with $[G:2G]<\infty$ there is a \Folner\ sequence $\Phi$ 
that is not q.i.d. and a set $A\subset \bigcup_{N=1}^{\infty} \Phi_N$ with $\overline{\diff}_{\Phi}(A)=1$ and 
such that $t+B+B \not\subset A$ for any infinite set $B\subset G$ and any element $t\in G$.

It follows by the proof of this corollary, and in particular, the construction in \cite[Lemma 5.5]{charamaras_kousek_mountakis_radic2025BBingroups} that $\Phi$ and $A$ can be chosen so that $\overline{\diff}_{\Phi/2}(A)=0$. Hence, by passing to a subsequence of $\Phi$, which we still write as $\Phi$, we obtain 
$$\diff_{\Phi}(A)+\diff_{\Phi/2}(A)=1,$$
and the proof is complete.
\end{proof}

A similar argument shows the need to consider the 
same subsequence in the densities over the two \Folner\ 
sequences in Theorem \ref{main theorem}, namely Proposition \ref{same subsequence intro}. 

\begin{proposition}\label{same subsequence}
Let $(G,+)$ be a countable abelian group with 
$\ell=[G:2G]<\infty$ and $r=|\ker(D)|<\infty$. There is a set 
$A\subset G$ and a \Folner\ sequence $\Phi$ such that 
$$\overline{\diff}_{\Phi}(A)+\overline{\diff}_{\Phi/2}(A)=2, 
$$
but there does not exist an infinite set $B\subset G$ and 
$t\in G$ with $t+B+B\subset A$.        
\end{proposition}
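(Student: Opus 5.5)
Starting from \cite[Corollary 5.6]{charamaras_kousek_mountakis_radic2025BBingroups}, as in the proof of Theorem \ref{optimality 1}, I would take a \Folner\ sequence $\Phi$ that is not q.i.d. and a set $A_0\subset\bigcup_N\Phi_N$ with $\overline{\diff}_{\Phi}(A_0)=1$ containing no $t+B+B$. The idea is to interleave two pieces. Define a new \Folner\ sequence $\Psi$ whose odd terms $\Psi_{2N-1}$ come from the original $\Phi$, and whose even terms satisfy that $\Psi_{2N}/2$ is essentially a \Folner\ set on which the construction also has full density. Then take $A$ to be the union of the set built on the odd terms and the set built on the even halves. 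Then $\overline{\diff}_{\Psi}(A)=1$ is witnessed along the odd indices, and $\overline{\diff}_{\Psi/2}(A)=1$ along the even indices, giving a sum of $2$ while the densities along a common subsequence never add up to more than $1$.

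Concretely, I would take $\Psi$ from the construction in \cite[Lemma 5.5]{charamaras_kousek_mountakis_radic2025BBingroups}. There one has sets $\Phi_N$, very sparse and far apart, with $\Phi_N/2$ disjoint from all $\Phi_M$ and $\Phi_M/2$ for $M\ne N$. Let $A$ be the union of $\Phi_N$ over odd $N$ and $\Phi_N/2$ over even $N$. Along odd $N$ we get $|A\cap\Phi_N|/|\Phi_N|=1$. Along even $N$ we get $|A\cap \Phi_N/2|/|\Phi_N/2|=1$. Hence $\overline{\diff}_{\Phi}(A)=\overline{\diff}_{\Phi/2}(A)=1$. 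This needs the blocks $\Phi_N$, $\Phi_N/2$ (and, if needed, $\Phi_N/4$) to be pairwise disjoint across all $N$, which can be arranged by passing to a sparse subsequence, exactly as in the proof of Proposition \ref{no reverse implication of main theorems}.

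The main obstacle is proving that $A$ contains no $t+B+B$ with $B$ infinite. I would reduce this, via Lemmas \ref{lemma reductions with shift} and \ref{lemma reductions no shift}, to ruling out $B+B+g_i\subset A$. Then I would use the growth and separation of the blocks, as in \cite[Lemma 5.5]{charamaras_kousek_mountakis_radic2025BBingroups}. If $b_n\to\infty$, then for a fixed $b_1$ and large $n$, the element $b_1+b_n+t$ and the element $2b_n+t$ must lie in blocks whose scales are related by a factor of about $2$. A non-q.i.d., very sparse block structure forbids this, because $\Phi_N$ and $2\Phi_N$ (equivalently, $\Phi_N/2$ and $\Phi_N$) are essentially disjoint and no other block sits at scale $2\times$. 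Since $A$ now also contains the halves $\Phi_N/2$, I must additionally check that $2b_n+t\in\Phi_M$ together with $b_n+b_1+t\in \Phi_{M'}/2$ is impossible. I would handle this by making the blocks grow superexponentially (in $\Z$, $F_N=[4^{N^2},2\cdot4^{N^2})$-type intervals), so that the only scale-compatible pairs are $(\Phi_N,\Phi_N/2)$ for the same $N$. The alternating parity of $N$ in the definition of $A$ then kills precisely these pairs. For general $G$ I would import the corresponding separation properties from the construction of \cite[Lemma 5.5]{charamaras_kousek_mountakis_radic2025BBingroups}, which I expect to be the most delicate verification.
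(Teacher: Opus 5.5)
\textbf{There is a genuine gap: the concrete set you propose contains $B+B$.} Interleaving $\Phi$ on odd indices with $\Phi/2$ on even indices is the right idea; the paper does essentially this. The failure is in your choice of $A$ as a union of \emph{full} blocks. Take your own $\Z$ example: $\Phi_N=[4^{N^2},2\cdot 4^{N^2})$ and $A=\bigcup_{N \text{ odd}}\Phi_N\cup\bigcup_{N\text{ even}}\Phi_N/2$. Let $B=\{4^{M^2}-1 : M\text{ odd}\}$.
\begin{itemize}
\item For each odd $M$ we have $2(4^{M^2}-1)=2\cdot 4^{M^2}-2\in\Phi_M\subset A$.
\item For odd $M<M'$ we have $(4^{M^2}-1)+(4^{M'^2}-1)=4^{M'^2}+(4^{M^2}-2)\in\Phi_{M'}\subset A$.
\end{itemize}
So $B+B\subset A$ already with $t=0$.

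The step that breaks is your claim that the only scale-compatible pairs are $(\Phi_N,\Phi_N/2)$. A block $[a,2a)$ itself spans a factor of $2$. For a fixed constant $c=t+2b_1$, an element $x$ near the left end of $\Phi_N$ has $2x-c$ near the right end of the \emph{same} block. In your own reduction, $b_1+b_n+t$ and $2b_n+t$ can both lie in one $\Phi_{M_n}$. Being non-q.i.d., or even $\Phi_N\cap\Phi_N/2=\emptyset$, only controls $\Phi_N\cap\Phi_N/2$. It says nothing about $\Phi_N\cap(\Phi_N-c)/2$ for fixed shifts $c$, and one such element per block is enough to build an infinite $B$.

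So you must delete from each block a part that has negligible density but separates $x$ from $2x-c$ for every fixed $c$ (compare $[4^{4N},(2-\frac1N)4^{4N}]$ in $\Z$). This deletion has to handle the odd blocks and the even half-blocks \emph{simultaneously*}. Taking the union of two separately built $t+B+B$-free sets does not by itself give a $t+B+B$-free set. The cross-block verification you postpone is therefore the whole content of the proof, and it cannot be imported from \cite[Lemma 5.5]{charamaras_kousek_mountakis_radic2025BBingroups} for full blocks.

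\textbf{How the paper avoids this.} It never checks the cross-interactions by hand. It forms the single sequence $F_N=\Phi_{2N}\cup(\Phi_{2N+1}/2)$. Then $F_N/2=(\Phi_{2N}/2)\cup(\Phi_{2N+1}/4)$ is disjoint from $F_N$ by the disjointness assumptions, so $F$ is a non-q.i.d.\ \Folner\ sequence. The black box \cite[Proposition 5.4, Corollary 5.6]{charamaras_kousek_mountakis_radic2025BBingroups} is applied to $F$ itself. After passing to a subsequence, this gives $A=\bigcup A_N$ with $A_N\subset F_N$, $\diff_F(A)=1$, and no $t+B+B\subset A$. The two upper densities equal to $1$ are then witnessed along $(\Phi_{2N})$ and $(\Phi_{2N+1}/2)$ respectively.

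If you recast your argument this way, the gap disappears. You should also keep $|\Phi_{2N}|$ and $|\Phi_{2N+1}/2|$ comparable, so that full density along $F_N$ passes to each piece separately.
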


\begin{proof}
%This is easy to see in $\Z$ (once one is familiar with 
%the construction in 
%\cite[Proposition 4.1]{kousek_radic2025BB}). Indeed, we 
%let 
%$$A=\bigcup_{N=1}^{\infty} [4^{4N}, (2-\frac{1}{N})\cdot 4^{4N}] \cup [(2+\frac{1}{N})\cdot 4^{4N+2}, 4^{4N+3}] \cap \Z ,$$
%we let $\Phi=(\Phi_N)=([4^{4N}, 2\cdot 4^{4N}]\cap \Z)$ and 
%look at $(\Phi_{2N})$ and $(\Phi_{2N+1}/2)$. The details are then easy to verify. 
We again rely on \cite[Corollary 5.6]{charamaras_kousek_mountakis_radic2025BBingroups}. Indeed, we can consider a non-q.i.d. \Folner\ sequence $\Phi=(\Phi_N)$, so that $\Phi/2=(\Phi_N/2)$ is also a non-q.i.d. \Folner\ and even $\Phi_N\cap (\Phi_N/2)=\emptyset$ and $\Phi_N\cap (\Phi_N/4)=\emptyset$. Moreover, we can choose $\Phi=(\Phi_N)$ so that the sets $\Phi_{N}$ are pairwise disjoint and so are the sets $\Phi_N/2$ (see the proof of \cite[Lemma 5.5]{charamaras_kousek_mountakis_radic2025BBingroups} for the construction of a non-q.i.d. \Folner\ ; this is not necessary, we could alternatively refine the sequence so that $|\Phi_{N+1}| \gg |\Phi_N|$). 

Then, we consider the \Folner\ sequence $F=(F_N)$ defined via $F_N=\Phi_{2N} \cup (\Phi_{2N+1}/2)$. Then, $F_N/2=(\Phi_{2N}/2) \cup (\Phi_{2N+1}/4)$ and by the disjointness assumptions it follows that $F=(F_N)$ is non-q.i.d. Passing to a subsequence of $(F_N)$ if needed, we then find a set $A=\bigcup_{N=1}^{\infty} A_N$ so that $A_N\subset F_N$ and $\diff_{F}(A)=1$ and there does not exist an infinite set $B\subset G$ and 
$t\in G$ with $t+B+B\subset A$. Finally, note that 
$$\overline{\diff}_{\Phi}(A)+\overline{\diff}_{\Phi/2}(A)=\diff_{(\Phi_{2N})}(A)+\diff_{(\Phi_{2N+1}/2)}(A)=2.
$$
\end{proof}

Addressing the optimality of Theorem \ref{main theorem} in 
the 
strong sense of Question \ref{open_qu_0} for all abelian 
groups under consideration and all \Folner\ sequences seems 
to be a very difficult task. 
For the moment, we confine ourselves to proving that our 
theorem is optimal in the integer setting for some special 
class of \Folner\ sequences. This is already 
interesting, and useful -- enriching our discussion in Section
\ref{Owing} -- but the construction exploits the 
structure of the group $\Z$ crucially as the reader may 
observe.  

We say that $\Phi=(\Phi_N)$ is a \Folner\ sequence of 
intervals in $\Z$ if each $\Phi_N$ is an interval. In some 
sense, such special 
sequences are the building blocks of all \Folner\ sequences 
in $\Z$. We 
do not make this fact precise, but the 
point is that the generality of the next result (which is Proposition \ref{optimality in Z intro} from the introduction) is 
non-trivial. 

\begin{proposition}\label{optimality in Z}
Let $\Phi$ be any \Folner\ sequence of intervals in $\Z$. 
Then, there 
exists a set $A\subset \Z$ such that 
$\diff_{\Psi}(A)+\diff_{\Psi/2}(A)=1$,
along a subsequence $\Psi$ of $\Phi$ for which the densities exist, for which $t+B+B \not \subset A$, for any infinite set $B$ and 
any $t\in \Z$.
\end{proposition}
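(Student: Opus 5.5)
The plan is to use a single explicit set built from dyadic-type blocks, rather than a construction tailored to $\Phi$. Fix a sequence $g_m\to\infty$ with $g_m=\oh(4^m)$, for instance $g_m=m$, and set
$$W^+=\bigcup_{m\ge 1}\,[4^m,\,2\cdot 4^m-g_m)\cap\Z,\qquad A=W^+\cup(-W^+).$$
The key heuristic is a doubling duality: multiplication by $2$ maps the block $[4^m,2\cdot 4^m)$ onto the gap $[2\cdot 4^m,4^{m+1})$, and conversely. So $x\in\Phi_N/2$ lies in $A$ essentially exactly when $2x\in\Phi_N\cap 2\Z$ lies outside $A$. I will show that $\diff_{\Psi}(A)+\diff_{\Psi/2}(A)=1$ along every subsequence $\Psi$ of $\Phi$ for which both densities exist (such a subsequence exists by compactness). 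Note that this step uses only that the $\Phi_N$ are intervals whose lengths tend to infinity.

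For the density identity, write $\Phi_N=[a_N,b_N]$ with $L_N=b_N-a_N\to\infty$, so that $\Phi_N/2=[\lceil a_N/2\rceil,\lfloor b_N/2\rfloor]$ and $|\Phi_N/2|=L_N/2+O(1)$. Let $W=\bigcup_m \pm[4^m,2\cdot 4^m)$ be the version of $A$ without the cut-offs. By the duality above, $|W\cap\Phi_N/2|=|(\Phi_N\cap 2\Z)\setminus W|+O(1)$. Since $W$ is a union of intervals, the even numbers have relative density $\tfrac12$ inside each maximal piece of $\Phi_N\cap W$ and of $\Phi_N\setminus W$, up to an error of $1$ per piece. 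The number of pieces meeting an interval of length $L$ is $O(1+\log L)$, because a block or gap at height $h$ has length $\asymp h$. Hence
$$|W\cap\Phi_N/2|=\tfrac12\bigl(L_N-|W\cap\Phi_N|\bigr)+O(\log L_N),$$
and dividing by $|\Phi_N/2|$ gives $\diff_{\Psi/2}(W)=1-\diff_{\Psi}(W)$. Replacing $W$ by $A$ changes the count in an interval of length $L$ by at most $\sum_{4^m\lesssim L}g_m+O(\max g_m\text{ over blocks meeting it})$. This is $\oh(L)$ because $g_m=\oh(4^m)$: an interval of length $L$ far from the origin meets only $O(1)$ blocks, each of length $\gg L$ only when the cut-off is negligible relative to its overlap. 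This bookkeeping is routine, and the identity $\diff_\Psi(A)+\diff_{\Psi/2}(A)=1$ follows.

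For the absence of sumsets, suppose $t+B+B\subset A$ with $B$ infinite. Since $A=-A$ and $t+B+B\subset A$ iff $-t+(-B)+(-B)\subset A$, I may pass to an infinite subset and assume $B=\{b_1<b_2<\cdots\}\subset\N$ with $b_{j+1}/b_j\to\infty$. Fix $i$ and let $j\to\infty$. Then $t+2b_j\in[4^m,2\cdot4^m-g_m)$ for some $m=m(j)\to\infty$, so
$$t+b_i+b_j=\tfrac12(t+2b_j)+b_i+\tfrac t2\in\Bigl[\tfrac{4^m}{2}+b_i+\tfrac t2,\;4^m-\tfrac{g_m}{2}+b_i+\tfrac t2\Bigr).$$
For large $j$ the left endpoint lies above $2\cdot4^{m-1}-g_{m-1}$, so this range meets no block of $A$ other than possibly $[4^m,\dots)$. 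Reaching that block requires $4^m\le 4^m-g_m/2+b_i+t/2$, that is, $g_m\le 2b_i+t$. This fails for large $m$ because $g_m\to\infty$, which is the desired contradiction.

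I expect this boundary case to be the main obstacle, and it is exactly why the cut-offs $g_m$ are needed. Without them the uncut set $W$ does contain sumsets: $B=\{4^m-1\}$ with $t=0$ gives $B+B\subset W$, because elements sitting just below $4^m$ let $\tfrac12(t+2b_j)+b_i$ spill across the gap boundary into the next block. The cut-off removes precisely the top $g_m$ elements of each block that could do this. I will need to check carefully that the cut-offs are small enough not to disturb the density identity, and that the lower-boundary and small-$m$ cases (including intervals $\Phi_N$ straddling $0$, handled by the symmetric negative part) do not create further exceptions.
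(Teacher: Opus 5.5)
\textbf{There is a genuine gap in the step you call routine bookkeeping.} Replacing $W$ by $A$ does not cost $\oh(L_N)$ in general. In fact no cut-off sequence chosen independently of $\Phi$ can work.

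Take any $g_m\to\infty$ with $g_m<4^m$, and let $\Phi_N=[2\cdot 4^N-g_N,\,2\cdot 4^N)\cap\Z$. These are intervals of length $g_N\to\infty$, so they form a \Folner\ sequence of intervals. But $\Phi_N$ is exactly the removed top of the $N$-th block, so $A\cap\Phi_N=\emptyset$. Also $\Phi_N/2\subset[4^N-g_N/2,\,4^N)$, which lies in the gap $[2\cdot 4^{N-1},4^N)$, so $A\cap\Phi_N/2=\emptyset$ as well. Hence $\diff_\Psi(A)+\diff_{\Psi/2}(A)=0$ along every subsequence.

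Your justification fails because an interval far from the origin can be much shorter than the cut-off at its scale. What has to be small is $g_m$ relative to $L_N$, not relative to $4^m$. The rest of your argument is sound. The identity for the uncut set $W$ (via $x\in W\iff 2x\notin W$ and the $O(\log L_N)$ piece count) is correct. The sumset exclusion, which uses only $g_m\to\infty$, is also correct.

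\textbf{The repair is to let the cut-offs depend on $\Phi$.} Put $\lambda_m=\inf\{L_N:\Phi_N\cap\bigl(\pm[4^m,\infty)\bigr)\neq\emptyset\}$. Only finitely many $\Phi_N$ have length below any given bound, so $\lambda_m$ is nondecreasing and tends to infinity. Hence $g_m=\min(m,\lfloor\sqrt{\lambda_m}\rfloor)$ still tends to infinity.

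Now bound the error on an interval of length $L$:
\begin{itemize}
\item It meets at most one cut-off on each side with $4^m>L$, and each has size at most $\sqrt{L}$ by construction.
\item The same holds for $\Phi_N/2$, since $\Phi_N/2$ meets $\pm[4^m,\infty)$ only if $\Phi_N$ does.
\item The cut-offs with $4^m\le L$ have total size $O(\log^2 L)$.
\end{itemize}
So the error is $O(\sqrt{L_N}+\log^2 L_N)=\oh(L_N)$, and the identity then holds along all of $\Phi$.

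\textbf{Comparison with the paper.} With this fix your argument is a valid alternative route. The paper works locally. It builds the non-q.i.d.\ \Folner\ sequence $F_N=\bigcup_j(\Phi_N/2^{2j})\setminus(\Phi_N/2^{2j+1})$, which is the same doubling alternation confined to each $\Phi_N$. It then checks that the density sum of $\bigcup_m F_{N_m}$ is $1$, using the interval identity $\Phi_N\cap\Phi_N/2^i\cap\Phi_N/2^{i+1}=\Phi_N\cap\Phi_N/2^{i+1}$. Finally it invokes \cite[Proposition 5.4]{charamaras_kousek_mountakis_radic2025BBingroups} as a black box to extract a full-density sumset-free subset along a further subsequence.

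Your global set makes the sumset exclusion explicit and avoids that black box. However, the set must be tailored to $\Phi$, contrary to your opening claim that a single explicit set works for every $\Phi$.
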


\begin{proof}    
We will choose a sequence $(N_m)_{m\in \N}$ and construct a \Folner\ sequence $F=(F_m)_{m\in \N}$ that 
is not q.i.d. for which 
$F_m\subset \Phi_{N_m}\cup \Phi_{N_m}/2$, such that $A'=\bigcup_{m=1}^{\infty} F_m$ satisfies $\overline{\diff}_{\Phi}(A')+\overline{\diff}_{\Phi/2}(A')=1$ and $(N_m)_{m\in \N}$ is chosen so that 
$$\overline{\diff}_{\Phi}(A')+\overline{\diff}_{\Phi/2}(A')=\lim_{m\to \infty} \frac{ |F_m \cap \Phi_{N_m}|}{|\Phi_{N_m}|}+\lim_{m\to \infty} \frac{ |F_m \cap \Phi_{N_m}/2|}{|\Phi_{N_m}/2|}.$$
This can be arranged, for example, by choosing $(N_m)$ so that
$\frac{|\Phi_{N_m}|}{|\Phi_{N_{m+1}}|}\xrightarrow{m\to \infty} 0$.

Having done that, we can then use 
\cite[Proposition 5.4] {charamaras_kousek_mountakis_radic2025BBingroups} in order to 
find a sequence $(m_k)_{k\in \N}$ and set $A\subset \Z$ such 
that 
$A=\bigcup_{k=1}^{\infty} A_{m_k}$, where 
$A_{m_k}\subset F_{m_k}$ and $\overline{\diff}_{F}(A)=\diff_{(F_{m_k})}(A)=1$,
for which $t+B+B \not \subset A$, for any infinite set $B$ 
and 
any $t\in \Z$. These properties of $A$ and the definition of $F$ and $A'$ in turn imply that $\diff_{\Psi}(A)+\diff_{\Psi/2}(A)=1$, where $\Psi=(\Phi_{N_{m_k}})_{k\in \N}$, concluding the proof. 

We are left with constructing $F=(F_m)_{m\in \N}$.
To ease notation, without loss of generality we assume that $N_m=m$, so that (a posteriori) $A'=\bigcup_{N=1}^{\infty}F_{N}$ with $F_N\subset \Phi_N \cup \Phi_N/2$.

For each $N\in \N$, we let $S_{N,0}=\Phi_N\setminus \Phi_N/2$. 
Then, we define $S_{N,1}=S_{N,0}/2^{2}=(\Phi_N/4)\setminus (\Phi_N/8)$, $S_{N,2}=S_{N,0}/2^{4}=(\Phi_N/16)\setminus (\Phi_N/32)$ and more generally
$$S_{N,j}=S_{N,0}/2^{2j}=(\Phi_N/2^{2j})\setminus (\Phi_N/2^{2j+1}),$$
for $j=0,\ldots,k_N$, where $k_N$ is the largest integer $j$ 
for which $\Phi_N/2^{2j} \cap \left(\Phi_N\cup \Phi_N/2\right) \neq \emptyset$. 

We then define $F_N=\bigcup_{j=0}^{k_N} S_{N,j}$. Observe 
that by the definitions, the previous 
union is a disjoint union, and this will be important in the 
calculations to follow. We remark that we have used our 
assumption that $(\Phi_N)$ is a sequence of intervals here. 

Moreover, we observe that $(S_{N,0})$ is a sequence of 
intervals whose lengths increase to infinity and it readily
follows that $(F_N)$ is asymptotically invariant, i.e. a \Folner\ sequence. Finally, 
$$F_N/2=\bigcup_{j=0}^{k_N} S_{N,j}/2=\bigcup_{j=0}^{k_N} (\Phi_N/2^{2j+1})\setminus (\Phi_N/2^{2(j+1)}),$$
so that $F_N\cap F_N/2=\emptyset$. Thus $(F_N)$ is a q.i.d. \Folner\ sequence.  

We then see that 
\begin{align}\label{eq:6} 
& \frac{|F_N\cap \Phi_N|}{|\Phi_N|}=\sum_{j=0}^{k_N} \frac{|S_{N,j}\cap \Phi_N|}{|\Phi_N|}= \frac{|\Phi_N\setminus \Phi_N/2|}{|\Phi_N|} + \sum_{j=1}^{k_N} \frac{|(\Phi_N \cap \Phi_N/2^{2j})\setminus \Phi_N/2^{2j+1} |}{|\Phi_N|}\nonumber \\
=& 1 - \frac{|\Phi_N\cap \Phi_N/2|}{|\Phi_N|} + \sum_{j=1}^{k_N} \frac{|\Phi_N \cap \Phi_N/2^{2j} |}{|\Phi_N|} - \sum_{j=1}^{k_N} \frac{|(\Phi_N \cap \Phi_N/2^{2j}) \cap \Phi_N/2^{2j+1} |}{|\Phi_N|}.
\end{align}
Similarly, but noting that $S_{N,0}\cap \Phi_N/2=\emptyset$, 
and since $|H_N/2|=|H_N|/2+\oh_{N\to\infty}(1)$, for any \Folner\ sequence $(H_N)$ in $\Z$, we get
that 
\begin{equation}\label{eq:7}
\frac{|F_N\cap \Phi_N/2|}{|\Phi_N/2|}=\sum_{j=1}^{k_N} \frac{|\Phi_N \cap \Phi_N/2^{2j-1} |}{|\Phi_N|} - \sum_{j=1}^{k_N} \frac{|(\Phi_N \cap \Phi_N/2^{2j-1} ) \cap \Phi_N/2^{2j} |}{|\Phi_N|}+\oh_{N\to\infty}(1). 
\end{equation}
Now, observe that $\Phi_N \cap \Phi_N/2^i \cap \Phi_N/2^{i+1} = \Phi_N \cap \Phi_N/2^{i+1}$, where we have crucially used that $(\Phi_N)$ is a sequence of intervals once again. Using this in \eqref{eq:6} and \eqref{eq:7} we obtain that 
\begin{equation}\label{eq:8}
\frac{|F_N\cap \Phi_N|}{|\Phi_N|} = 1 - \frac{|\Phi_N\cap \Phi_N/2|}{|\Phi_N|} + \sum_{j=1}^{k_N} \frac{|\Phi_N \cap \Phi_N/2^{2j} |}{|\Phi_N|} - \sum_{j=1}^{k_N} \frac{|\Phi_N \cap \Phi_N/2^{2j+1} |}{|\Phi_N|}
\end{equation}
and 
\begin{equation}\label{eq:9}
\frac{|F_N\cap \Phi_N/2|}{|\Phi_N/2|} = \sum_{j=1}^{k_N} \frac{|\Phi_N \cap \Phi_N/2^{2j-1} |}{|\Phi_N|} - \sum_{j=1}^{k_N} \frac{|\Phi_N \cap \Phi_N/2^{2j} |}{|\Phi_N|}+\oh_{N\to\infty}(1). 
\end{equation}
It follows that 
$$\frac{|F_N\cap \Phi_N|}{|\Phi_N|} + \frac{|F_N\cap \Phi_N/2|}{|\Phi_N/2|}= 1+\oh_{N\to\infty}(1),$$
where we also used that $\Phi_N \cap \Phi_N/2^{2k_N+1} = \emptyset$. Indeed, if $\Phi_N \cap \Phi_N/2^{2k_N+1} \neq \emptyset$ then we would also have $\Phi_N/2 \cap \Phi_N/2^{2(k_N+1)} \neq \emptyset$, contradicting the definition of $k_N$. Therefore, 
\begin{equation*}
\diff_{\Phi}(A')+ \diff_{\Phi/2}(A') = 1
\end{equation*}
and this concludes the proof.
\end{proof}

\appendix 

\section{Connection to Owings' problem}\label{Owing}

We now present a connection between the main result and 
Owings' problem mentioned in the introduction. The 
observations presented herein merely serve the purpose of 
illustrating one of the reasons why our main result, Theorem \ref{main theorem}, was discovered -- as explained in the introduction. Indeed, Owings' problem was solved in the positive very recently in \cite{owings_solution}. Perhaps one conclusion of the considerations to follow is, in our opinion, that there probably is no density-type of argument solution to the problem. 

\begin{question}\label{B+B}
Let $\N=C_1 \cup C_2$ be a $2$-colouring of 
the natural numbers. Does there exist an infinite set $B=\{b_1<b_2< \cdots \}\subset \N$, such that $B+B=\{b_i+b_j: i,j\in \N\} \subset C_{k}$, for some $k\in \{1,2\}$?
\end{question}

As was observed in \cite[Remark 6.2]{kousek_radic2025BB}, in 
order to answer Question \ref{B+B}, one may allow a shift in 
the sumsets. This follows directly from the fact that $\N=C_1/2 \cup C_2/2$, where $C_k/2:=\{n\in \N: 2n\in C_k\}$, is also a $2$-colouring of $\N$, and the observation that $B+B+t \subset C_k/2$ implies that $(2B+t)+(2B+t) \subset C_k.$ Specifically, the following question is 
equivalent to Question \ref{B+B}.

\begin{question}\label{B+B shift}
Let $\N=C_1 \cup C_2$ be a $2$-colouring of 
the natural numbers. Does there exist an infinite set $B=\{b_1<b_2< \cdots \}\subset \N$, and some $t\in \N_0$, such that $t+B+B=\{t+b_i+b_j: i,j\in \N\} \subset C_{k}$, for some $k\in \{1,2\}$?
\end{question}

We repeat that Questions \ref{B+B} and \ref{B+B shift} both have a positive answer, as shown in \cite{owings_solution}.  

\begin{theorem}[{\cite[Theorem 1.1]{owings_solution}}]  \label{owings_solution_theorem}
Let $\N=C_1\cup C_2$. Then there exists $i\in \{1,2\}$ and an infinite $B\subset \N$ such that $B+B\subset C_i$.   
\end{theorem}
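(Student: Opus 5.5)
Since the statement is quoted from \cite{owings_solution}, what follows is only the route I would try, which follows the density philosophy of this paper. The plan is to split into a ``density case'' and a ``rigid case''. By the discussion around Question \ref{B+B shift}, it suffices to find $k\in\{1,2\}$, an infinite $B$ and some $t\in\N_0$ with $t+B+B\subset C_k$. The shift is harmless because $C_1/2\cup C_2/2$ is again a $2$-colouring.

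\textbf{Density case.} Fix a \Folner\ sequence of intervals $\Phi$ in $\Z$ and pass to a subsequence $\Psi$ along which all four densities $\diff_{\Psi}(C_k)$, $\diff_{\Psi/2}(C_k)$ exist. Because $C_1\sqcup C_2=\N$, these satisfy
$$\sum_{k=1}^2\big(\diff_{\Psi}(C_k)+\diff_{\Psi/2}(C_k)\big)=2.$$
If some colour has $\diff_{\Psi}(C_k)+\diff_{\Psi/2}(C_k)>1$, then Theorem \ref{main theorem}\eqref{main_theorem_1_2} gives $t+B+B\subset C_k$ and we are done.

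\textbf{Rigid case.} Otherwise, for every such $\Psi$ we have $\diff_{\Psi}(C_1)=\diff_{\Psi/2}(C_2)$. I would upgrade this, by a compactness argument over all sequences of long intervals, to a uniform statement: for every $\varepsilon>0$ there is $L$ such that every interval $I\subset\N$ with $|I|\ge L$ satisfies
$$\left|\frac{|C_1\cap I|}{|I|}-\frac{|C_2\cap I/2|}{|I/2|}\right|<\varepsilon .$$
Iterating this twice shows that the density of $C_1$ on $I$ approximately equals its density on $I/4$. This is exactly the self-similar structure of the extremal examples in Proposition \ref{optimality in Z} and Theorem \ref{optimality 1}, for instance $C_1\approx\bigcup_j[4^j,2\cdot 4^j)$. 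The same dichotomy applies to the halved colouring $(C_1/2,C_2/2)$, so in the rigid case both colourings are rigid simultaneously.

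The remaining step, which I expect to be the main obstacle, is to find $B$ directly in the rigid case. The closing remark of the paper suggests that density alone cannot settle it, since the thresholds are sharp. Here I would switch to ultrafilters. Having $B+B\subset C$ for an infinite $B$ amounts to finding a nonprincipal ultrafilter $p$ with $C/2\in p$ and $C\in p+p$, where $C/2\in p$ controls the diagonal terms $2b$ and $C\in p+p$ controls the off-diagonal sums.

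I would try to build such a $p$ out of the approximate $4$-adic self-similarity: choose $p$ concentrated on scales where the colouring is close to the model pattern $\bigcup_j[4^j,2\cdot4^j)$. I would then check by hand that, in this model, some colour, possibly after a shift, contains $t+B+B$. For example, I would take $b_n$ near $4^{j_n}$ with rapidly increasing $j_n$, so that the sums $b_n+b_m$ and the doubles $2b_n$ fall into blocks of a controlled colour. Making this robust to the $\varepsilon$-errors, and uniform over all near-extremal colourings, is where I expect the genuine combinatorial work of \cite{owings_solution} to be needed.
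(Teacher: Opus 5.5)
\paragraph{Verdict.} The proposal has a genuine gap: it reduces the theorem to a ``rigid case'' and does not resolve it. The mechanism you suggest for that case also cannot work.

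Your density case is sound; it is the appendix's argument turned into a dichotomy. Passing to a uniform statement over long intervals is a routine subsequence argument, and the ultrafilter reformulation ($C/2\in p$, $C\in p+p$) is correct. But that reformulation only restates the goal, so all the content of the theorem sits in the rigid case.

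\paragraph{Rigidity does not force the block pattern.} Rigidity only says that $\diff(C_1,I)+\diff(C_1,I/2)\approx 1$ on long intervals. The constant density profile $1/2$ satisfies this, so nothing forces the colouring to resemble $\bigcup_j[4^j,2\cdot 4^j)$ at any scale.

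Concretely, take the Thue--Morse colouring $C_1=\{n\in\N: s_2(n)\ \text{even}\}$, $C_2=\N\setminus C_1$, where $s_2$ is the binary digit sum.
\begin{itemize}
\item Since $s_2(2m+1)=s_2(2m)+1$, each colour meets every interval $I$ in $|I|/2+O(1)$ points.
\item A \Folner\ set in $\Z$ is a union of $\oh(|\Phi_N|)$ maximal intervals, so $\diff_{\Phi}(C_k)=\diff_{\Phi/2}(C_k)=1/2$ for every \Folner\ sequence $\Phi$ and $k=1,2$.
\item Moreover $C_k/2=C_k$, so every halving $C/2^j$ is rigid too, and your reduction to ``both colourings rigid'' does not exclude this example.
\end{itemize}
Each colour has density $1/2$ on every long interval. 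So there are no scales at which this colouring is close to your model pattern, and the step ``concentrate $p$ on such scales'' has nothing to act on. The theorem does hold here: $B=\{3\cdot 4^j: j\ge 1\}$ gives $B+B\subset C_1$. But it holds only through digit structure that no density along a \Folner\ sequence can detect. Bohr colourings $\{n:\{n\alpha\}<1/2\}$ with $\alpha$ irrational behave the same way.

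\paragraph{The model pattern is not the difficulty.} The set $\bigcup_{j\ge 1}[4^j,2\cdot 4^j)$ itself contains $B+B$ for $B=\{4^j-1: j\ge 1\}$, because $2(4^j-1)$ and $4^i+4^j-2$ (for $1\le i<j$) lie in $[4^j,2\cdot 4^j)$. The sets in Theorem \ref{optimality 1} and Proposition \ref{optimality in Z} are sparse thinnings of such block patterns, obtained from \cite[Proposition 5.4]{charamaras_kousek_mountakis_radic2025BBingroups}. They are not colourings, and nothing prevents their complements from containing sumsets, so they do not indicate what a hard $2$-colouring looks like.

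\paragraph{What is missing.} After your density case you are left with colourings whose relevant structure is invisible to the density information the paper controls. This is exactly the phenomenon behind the appendix's remark that, since Corollary \ref{B+B in density} is sharp along interval \Folner\ sequences, a density argument is unlikely to settle Owings' problem. What you need is a genuinely non-density argument that covers all rigid colourings. The paper does not give one either; it only quotes the statement from \cite{owings_solution}. Your proposal defers precisely this step, so it is a reduction, not a proof.
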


As an immediate corollary of Theorem \ref{main theorem} for 
the integers we obtain the result in the natural numbers as 
well. 

\begin{corollary} \label{B+B in density}
Let $A\subset \N$ be a set and $\Phi=(\Phi_N)_{N\in \N}$ be a \Folner\ sequence in $\N$. Then, if 
\begin{equation}\label{density condition}
\d_{\Phi}(A)+\d_{\Phi/2}(A)>1,
\end{equation}
it follows that there exists an infinite set $B\subset \N$ and some $t\in \N_0$ such that $B+B \subset A-t$.
\end{corollary}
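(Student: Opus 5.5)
The plan is to deduce Corollary \ref{B+B in density} directly from part \eqref{main_theorem_1_2} of Theorem \ref{main theorem} applied in the group $G=\Z$. There $2\Z$ has index $\ell=2$ and $\ker(D)=\{0\}$, so $r=1$, and the standing hypotheses hold. First view $A\subset\N$ as a subset of $\Z$ and $\Phi$ as a \Folner\ sequence in $\Z$; it is still \Folner, since invariance under $g$ and under $-g$ are equivalent. Also $\Phi/2=D^{-1}(\Phi_N)$ computed in $\Z$ agrees with the one computed in $\N$, because $D^{-1}(\N)=\N$. The densities in \eqref{density condition} are taken to exist, as in the statement of Theorem \ref{main theorem}; if only upper densities were intended, one would first pass to a common subsequence along which both limits exist, which is harmless as explained after Theorem \ref{main theorem}.

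Theorem \ref{main theorem}\eqref{main_theorem_1_2} then gives an infinite $B\subset\Z$ and some $t\in\Z$ with $t+B+B\subset A\subset\N$. It remains to move $B$ into $\N$ and $t$ into $\N_0$. Since $B$ is infinite and $B+B$ is bounded below by $1-t$, the set $B$ is bounded below: otherwise $2b\to-\infty$ along elements $b\in B$. So, discarding finitely many elements, we may assume $B$ is unbounded above.

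To fix the signs, choose $c\in\Z$ with $B':=B+c\subset\N$. Setting $t'=t-2c$ gives $t'+B'+B'=t+B+B\subset A$. If $t'\geq0$ we are done. If $t'<0$, I would instead replace $B'$ by $B''=B'+s$ for an integer $s$ with $2s\geq -t'$, so that $t'':=t'-2s\leq 0$. The sign condition $t\in\N_0$ must then be handled by parity, and this is the main obstacle. Write $-t''=2u$ or $2u+1$. In the even case, absorb $u$ into $B''$ by setting $B'''=B''-u$; this gives $B'''+B'''\subset A$ with shift $0$, and $B'''\subset\N$ after dropping finitely many elements, since $B$ is unbounded above. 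In the odd case, choose $s$ so that the remaining shift is exactly $1\in\N_0$: writing $B+B+t=(B-m)+(B-m)+(t+2m)$, pick $m$ with $t+2m\in\{0,1\}$ and discard the finitely many elements of $B-m$ that are not positive. Either way we obtain an infinite $B\subset\N$ and $t\in\{0,1\}\subset\N_0$ with $t+B+B\subset A$, which is the claim.
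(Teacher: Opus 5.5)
Your argument is correct and follows the paper's route: the paper simply calls this an immediate corollary of Theorem~\ref{main theorem}\eqref{main_theorem_1_2} for $G=\Z$ (with $\ell=2$, $r=1$). In your normalization, the single step of writing $t+B+B=(B-m)+(B-m)+(t+2m)$ with $t+2m\in\{0,1\}$, then discarding the finitely many nonpositive elements of $B-m$ (possible because $B$ is bounded below), already does all the work, so the detour through $B'$, $B''$ and the even/odd cases can be dropped.

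One correction to your aside. If only upper densities were assumed, passing to a common subsequence along which both limits exist is \emph{not} harmless, because the sum of the two limits along that subsequence may drop to $\leq 1$. Proposition~\ref{same subsequence} shows the upper-density analogue genuinely fails, so existence of both densities along the same $\Phi$ is an essential hypothesis.
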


The relevant observation is that if Corollary \ref{B+B in density} (or, Theorem \ref{main theorem} in the integers) was not optimal, then we 
would obtain answer to Question \ref{B+B} as a direct application of this. More precisely, assume 
there exists a \Folner\ sequence $\Phi$ in $\N$ and some $\epsilon>0$ such that 
\eqref{density condition} in the context of 
Corollary \ref{B+B in density} can be replaced by
$\d_{\Phi}(A)+\d_{\Phi/2}(A)>1-\epsilon$, yielding the same conclusion. 
Then, given any colouring $\N=C_1\cup C_2$, and passing to a subsequence of $\Phi=(\Phi_N)$ if necessary, it must be the 
case that 
$$\d_{\Phi}(C_1)+\d_{\Phi/2}(C_1)+\d_{\Phi}(C_2)+\d_{\Phi/2}(C_2) =2,$$
which implies that for some $k\in \{1,2\}$, $\d_{\Phi}(C_k)+\d_{\Phi/2}(C_k)>1=\epsilon$, and thus there exist infinite $B\subset \N$ and $t\in \N$ such that $B+B \subset C_k-t$.

We have already showed that Corollary \ref{B+B in density} is optimal along \Folner\ sequences of intervals and we believe the proof should extend to all \Folner\ sequences. In particular, this would mean that an argument as the one laid out just now cannot be used to answer Question \ref{B+B shift}, i.e. to given an alternative proof of Theorem \ref{owings_solution_theorem}.

\bibliographystyle{abbrv}
\bibliography{Refs}

\vspace{1cm}

\end{document}